\numberwithin{equation}{section}
\title{Perfect Quantum Approximate Strategies for Imitation Games}
\author{Hao Liang, Tianshi Yu, Lihong Zhi}
\begin{document}

\maketitle
\newtheorem{defi}{Definition}[section]
\newtheorem{thm}{Theorem}[section]
\newtheorem{cor}[thm]{Corollary}
\newtheorem{prop}[thm]{Proposition}
\newtheorem{lem}{Lemma}[section]
\newtheorem{claim}{Claim}
\newtheorem{remark}{Remark}
\newtheorem{Lemma}{Lemma}
\newtheorem{Example}{Example}
\newcommand{\bN} { {\mathbb{N}}}   
\newcommand{\bC} { {\mathbb{C}}}  
\newcommand{\bQ} { {\mathbb{Q}}}   
\newcommand{\bZ} { {\mathbb{Z}}}   
\newcommand{\bR} { {\mathbb{R}}}   
\newcommand{\bF} { {\mathbb{F}}}
\newcommand{\bH} { {\mathbb{H}}}
\newcommand{\bK} { {\mathbb{K}}}
\newcommand{\op} { {\mathrm{op}}}
\newcommand{\alg} { {\mathrm{alg}}}
\newcommand*\abs[1]{\left\lvert#1\right\rvert} 
\newcommand*\norm[1]{\left\lVert#1\right\rVert} 

\let\bar=\overline
\let\leq=\leqslant
\let\geq=\geqslant

\begin{abstract}
	We prove that an imitation game has a perfect quantum approximate strategy if and only if there exists a bi-tracial state on the minimal tensor product of two universal C${^*}$-algebras, which induces the perfect correlation. Moreover, we are trying to relate imitation games to the minimal tensor product of two universal C${^*}$-algebras	and demonstrate that an imitation game has a perfect quantum approximate strategy if and only if there exist a von Neumann algebra and an amenable tracial state on it, such that the perfect correlation can be induced by the tracial state. However, We encountered some difficulties regarding continuity in the proof process. In section 2 we get some results for special cases, and in section 3 we list our problems.




\end{abstract}

\section{Preliminaries}

Let $ X,Y,A,B $ be finite sets. 
A {\it non-local game} $ \mathcal{G}=(X,Y,A,B,\lambda) $ involves a verifier and two players, Alice and Bob. We think of $ X,Y $ as question sets for Alice and Bob, and $ A,B $ as answer sets for them, respectively.
After receiving an answer from each player, the verifier  evaluates the  {\it scoring function}
\begin{equation}\label{eq2.6}
\lambda:~X\times Y\times A\times B\longrightarrow \{0,1\}
\end{equation}
If $ \lambda(x,y,a,b)=1$, we say Alice and Bob win; otherwise, they lose the game.
Alice and Bob know the sets $ X, Y, A, B $ and the scoring function $\lambda$,
but they can't communicate during the game.  
Alice and Bob can make some arrangements before the game starts. 

A {\it correlation}  for the game $\mathcal{G}$ is a conditional probability density $p=\left(p\left(a,b\mid x,y\right)\right)_{X\times Y\times A\times B}$, which satisfies $\sum_{(a,b)\in A\times B}p(a,b\mid x,y)=1,~\forall x\in X,~y\in Y.$
 We say a correlation $ p$ 
is a {\it perfect strategy} for $ \mathcal{G} =(X, Y, A, B, \lambda)$ if 
\begin{equation}\label{eq2.7}
\lambda(x,y,a,b)=0\Longrightarrow p(a,b\mid x,y)=0.
\end{equation}

In this paper, we focused on characterizing the perfect quantum approximate strategies of imitation games. 

\begin{defi}\cite{lupini2020perfect}\label{D2.2}
	A non-local game $\mathcal{G}=(X, Y, A, B, \lambda)$ is called an imitation game if
	\begin{enumerate}[(a)]
		\item\label{ca} for any $x \in X$ and $a, a^{\prime} \in A$ with $a \neq a^{\prime}$, there exists $y \in Y$ such that
		\begin{equation*}
		    \sum_{b \in B} \lambda(a, b \mid x, y) \lambda\left(a^{\prime}, b \mid x, y\right)=0,~and
		\end{equation*}

		\item\label{cb} for any $y \in Y$ and $b, b^{\prime} \in B$ with $b \neq b^{\prime}$, there exists $x \in X$ such that
		\begin{equation*}
		    \sum_{a \in A} \lambda(a, b \mid x, y) \lambda\left(a, b^{\prime} \mid x, y\right)=0 .
		\end{equation*}
	\end{enumerate}
\end{defi}

Conditions \ref{ca} and \ref{cb} in Definition \ref{D2.2} imply that, in a perfect strategy for $\mathcal{G}$, Alice's and Bob's answers are mutually determined.

Now, we introduce the concept of correlation. It's well-known that given a non-local game $\mathcal{G}$, the strategy of the players and the correlation for them are mutually determined.

\begin{defi}
A non-signalling correlation is a function $ p:X\times Y\times A\times B\rightarrow[0,1] $ which satisfies:
\begin{align*}
&\sum_{b\in B}p(a,b\mid x,y)=\sum_{b\in B}p(a,b\mid x,y^{\prime}),~\forall x\in X,~y,y^{\prime}\in Y,~a\in A\\
&\sum_{a\in A}p(a,b\mid x,y)=\sum_{a\in A}p(a,b\mid x^{\prime},y),~\forall x,x^{\prime}\in X,~y\in Y,~a\in A\\
&\sum_{(a,b)\in A\times B}p(a,b\mid x,y)=1,~\forall x\in X,~y\in Y.
\end{align*}
We denote the set of all non-signaling correlations by $ \mathcal{C}_{ns}(X,Y,A,B)$. Furthermore, for a scoring function $\lambda$, we denote the set of perfect non-signaling correlations as $ \mathcal{C}_{ns}(\mathcal{G})$. 

 \end{defi}
 

Next, we define quantum (resp. quantum approximate, commuting-operator) correlations. We use Dirac's notation. Recall that we can use PVM (projection-operator valued measure) instead of POVM (positive operator valued measure) to describe the definition\cite{fritz2014operator}. We denote the algebra of all bounded linear operators on Hilbert space $\mathcal{H}$ as $\mathcal{B}(\mathcal{H})$.

\begin{defi}
	For fixed $ X,Y,A,B $, $ p\in\mathcal{C}_{ns}(X,Y,A,B) $ is called a quantum correlation if there exist finite-dimensional Hilbert spaces $ \mathcal{H}_A,\mathcal{H}_B $ and a unit vector $ |\psi\rangle\in\mathcal{H}_A\otimes\mathcal{H}_B $ such that: for every $ x\in X $ and $ y\in Y $, there exist projection-valued measures (PVM)$ \{P_a^x:a\in A\}\subset\mathcal{B}(\mathcal{H}_A) $ and $ \{Q_b^y:b\in B\}\subset\mathcal{B}(\mathcal{H}_B) $ which satisfy
	\begin{equation}\label{eq2.4}
	p(a,b\mid x,y)=\langle\psi|P_a^x\otimes Q_b^y|\psi\rangle.
	\end{equation}
	We denote the set of all quantum correlations by $ \mathcal{C}_q(X,Y,A,B) $.  We define the closure of $ \mathcal{C}_q $ to be $ \mathcal{C}_{qa}(X,Y,A,B) $, which is the set of quantum approximate correlations. 
\end{defi}

More precisely, a correlation $p=\left(p(a,b\mid x,y)\right)\in\mathcal{C}_{qa}$ if and only if: for every $\varepsilon>0$, there exist finite-dimensional Hilbert spaces $ \mathcal{H}_A(\varepsilon),\mathcal{H}_B(\varepsilon) $ and a unit vector $ |\psi_{\varepsilon}\rangle\in\mathcal{H}_A\otimes\mathcal{H}_B $ such that: for every $ x\in X $ and $ y\in Y $, there exist projection-valued measures (PVM)$ \{P_a^x(\varepsilon):a\in A\}\subset\mathcal{B}(\mathcal{H}_A) $ and $ \{Q_b^y(\varepsilon):b\in B\}\subset\mathcal{B}(\mathcal{H}_B) $ which satisfy
\begin{equation}\label{def_qa}
	\left|p(a,b\mid x,y)-\langle\psi_{\varepsilon}|P_a^x(\varepsilon)\otimes Q_b^y(\varepsilon)|\psi_{\varepsilon}\rangle\right|<\varepsilon.
\end{equation}

\begin{defi}	
	The set $ \mathcal{C}_{qc}(X,Y,A,B) $ of quantum commuting correlations consists of the form
	\begin{equation}\label{eq2.5}
	p(a,b\mid x,y)=\langle\psi|P_a^xQ_b^y|\psi\rangle,
	\end{equation}
	where $ \mathcal{H} $ is a Hilbert space, $ |\psi\rangle\in\mathcal{H} $ is a unit vector and $ \{P_a^x:a\in A\},~\{Q_b^y:b\in B\}\subset\mathcal{B}(\mathcal{H}) $ are PVM's which satisfy $ P_a^xQ_b^y=Q_b^yP_a^x $.
\end{defi}

In the case of perfect strategies, we can write 
\begin{equation*}
\mathcal{C}_{\mathrm{x}}(\mathcal{G})=\mathcal{C}_{ns}(\mathcal{G})\cap \mathcal{C}_{\mathrm{x}}(X,Y,A,B),~\mathrm{x}=q,qa,qc,
 \end{equation*}
and we have  
 \begin{equation*}
     \mathcal{C}_q(\mathcal{G})\subseteq\mathcal{C}_{qa}(\mathcal{G})\subseteq\mathcal{C}_{qc}(\mathcal{G})\subseteq\mathcal{C}_{ns}(\mathcal{G}).
 \end{equation*}

In particular, (\ref{def_qa}) shows that a correlation $p=\left(p(a,b\mid x,y)\right)\in\mathcal{C}_{qa}(\mathcal{G})$ iff for every $\varepsilon>0$ and $\lambda(x,y,a,b)=0$, we have
\begin{equation}\label{def_perfect_qa}
	\langle\psi_{\varepsilon}|P_a^x(\varepsilon)\otimes Q_b^y(\varepsilon)|\psi_{\varepsilon}\rangle<\varepsilon
\end{equation}





Slofstra proved that the set of correlations induced by the tensor product structure (finite or infinite dimensional) is not closed  \cite{slofstra2019set}.  Slofstra's proof was soon improved by \cite{dykema2019non} using graph chromatic games, a type of synchronous game. In \cite{musat2020non}  Magdalena Musat and Mikael R$\phi$rdam gave a more explicit proof by the tracial state on C$^*$-algebras and the decomposition of identity operator into a sum of projection. 
In \cite{ji2020mip}, Ji et al. provided a breakthrough result that negatively resolved Connes' embedding conjecture. They showed that there are indeed quantum correlations that can be described using commuting operators on a single Hilbert space but cannot be approximated by a sequence of tensor product finite-dimensional Hilbert spaces.

 Quantum correlations can also be expressed as states on C$ ^* $-algebras. We refer the reader to \cite{kadison1997fundamentals,arveson2012invitation,brown88c} for basic knowledge of operator algebra. Here, we list some essential definitions.


\begin{defi}[tracial state]
	Let $ \mathcal{A} $ be a unital C$ ^* $-algebra, we say $ \phi $ is a state of $ \mathcal{A} $ if $ \phi:\mathcal{A}\rightarrow\bC $ is a positive linear functional and $ \phi(1)=1 $. A state $ \tau:\mathcal{A}\rightarrow\bC $ is called a tracial state if $ \tau(ab)=\tau(ba),~\forall a,b\in\mathcal{A} $.
\end{defi}

Given two C$ ^* $-algebras $ \mathcal{A} $ and $ \mathcal{B} $, their algebraic tensor product $ \mathcal{A}\otimes_{\alg}\mathcal{B} $ is a $ * $-algebra. We can define different norms on it to make $ \mathcal{A}\otimes_{\alg}\mathcal{B} $ into a C$ ^* $-algebra. One norm on $ \mathcal{A}\otimes_{\alg}\mathcal{B} $ is 
\begin{equation}\label{eq2.9}
\norm{a}_{\max}:=\underset{\pi}{\sup}\{\norm{\pi(a)} \mid \pi:\mathcal{A}\otimes_{\alg}\mathcal{B}\rightarrow\mathcal{B}(\mathcal{H})~ \text{is a representation of~}\mathcal{A}\otimes_{\alg}\mathcal{B}\},
\end{equation}
where $ \mathcal{H} $ is a Hilbert space. After completion we get a C$ ^* $-algebra $ \mathcal{A}\otimes_{\max}\mathcal{B} $, called max-tensor product. We can also define another norm as follows:
\begin{equation}\label{eq2.10}
\norm{\sum_{i=1}^{n}a_i\otimes b_i}_{\min}:=\norm{\sum_{i=1}^{n}\pi_1(a_i)\otimes\pi_2(b_i)}_{\mathcal{B}(\mathcal{H}_1\otimes\mathcal{H}_2)},
\end{equation}
where $ \pi_i:\mathcal{A}\rightarrow\mathcal{B}(\mathcal{H}_i),~i=1,2 $ are faithful representations. This norm is well defined and independent of the choice of faithful representations \cite{brown88c}, and after completion, we can get another C$ ^* $-algebra $ \mathcal{A}\otimes_{\min}\mathcal{B} $.

\begin{defi}[amenable]\cite{brown88c,kim2018synchronous}
	Let $\mathcal{A} \subseteq B(\mathcal{H})$ be a $C^*$-algebra. A tracial state $\tau$ on $\mathcal{A}$ is called amenable if there is a state $\rho$ on $B(\mathcal{H})$ such that $\rho\big|_{\mathcal{A}}=\tau$ and $\rho\left(u T u^*\right)=\rho(T)$ for all $T \in B(\mathcal{H})$ and all unitaries $u \in \mathcal{A}$.
\end{defi}

The amenable tracial state has several equivalent characterizations, see \cite[Theorem 6.2.7]{brown88c}.  Here, we only list the equivalent conditions that we require. Let $ \mathcal{A} $ be a C$ ^* $-algebra. Recall that the opposite C$ ^* $-algebra $ \mathcal{A}^{\op} $ is the C*-algebra that one obtains by preserving the Banach space structure on $ \mathcal{A} $ but defining a product $ "\bullet" $ by $ a\bullet b=ba $.

\begin{prop}\label{P2.2}\cite{brown88c}
Let $\mathcal{A}$ be a unital C$^*$-algebra with tracial state $\tau$. The following are equivalent:
\begin{enumerate}[(1)]
    \item $\tau$ is amenable;
    \item the linear map $\sigma:\mathcal{A}\otimes_{\alg}\mathcal{A}^{\op}\rightarrow\bC,~a\otimes b\mapsto \tau(ab)$ is bounded with respect to the minimal tensor product norm.
\end{enumerate}
\end{prop}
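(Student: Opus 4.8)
The plan is to reduce the statement to a property of the GNS representation of $\tau$ and then to feed in the finite-dimensional (matricial) picture of amenable traces. First I would record the basic dictionary. Writing $(\pi_\tau,\mathcal{H}_\tau,\xi_\tau)$ for the GNS triple of $\tau$, the trace property supplies the canonical conjugation $J$ with $J\xi_\tau=\xi_\tau$, and setting $\rho_\tau(b):=J\pi_\tau(b^*)J$ yields a representation of $\mathcal{A}^{\op}$ whose image commutes with $\pi_\tau(\mathcal{A})$, together with $\rho_\tau(b)\xi_\tau=\pi_\tau(b)\xi_\tau$. A direct computation then gives
\begin{equation*}
\sigma(a\otimes b)=\tau(ab)=\langle \pi_\tau(a)\rho_\tau(b)\xi_\tau,\xi_\tau\rangle,
\end{equation*}
so $\sigma$ is the vector functional of the representation $\pi_\tau\times\rho_\tau$ of $\mathcal{A}\otimes_{\alg}\mathcal{A}^{\op}$ assembled from two commuting representations on a single Hilbert space. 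By the universal property of the maximal tensor norm this already exhibits $\sigma$ as a state for $\norm{\cdot}_{\max}$; hence in particular $\sigma$ is positive with $\sigma(1\otimes1)=1$, and the whole content of the proposition is that amenability of $\tau$ is exactly the extra information needed to push this state down to the minimal norm.

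For $(1)\Rightarrow(2)$ I would pass through the matricial approximation contained in the same theorem \cite[Theorem 6.2.7]{brown88c}: amenability furnishes a net of unital completely positive maps $\varphi_i:\mathcal{A}\to M_{n_i}(\bC)$ that recover $\tau$, in the sense that $\mathrm{tr}_{n_i}(\varphi_i(a))\to\tau(a)$, and are asymptotically multiplicative, $\norm{\varphi_i(ab)-\varphi_i(a)\varphi_i(b)}_{2}\to0$. On matrices the map $M_{n}\otimes_{\min}M_{n}^{\op}\cong M_{n^2}\to\bC$, $X\otimes Y\mapsto\mathrm{tr}_n(XY)$, is the vector state associated to the maximally entangled unit vector (the $\op$-factor being represented by transposition), hence is $\min$-contractive. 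Since each $\varphi_i\otimes\varphi_i^{\op}:\mathcal{A}\otimes_{\min}\mathcal{A}^{\op}\to M_{n_i}\otimes_{\min}M_{n_i}^{\op}$ is again unital completely positive and therefore $\min$-contractive, composing with these matricial states and using Cauchy--Schwarz in the $2$-norm to pass to the limit exhibits $\sigma$ as a pointwise limit of functionals of norm at most one on $(\mathcal{A}\otimes_{\alg}\mathcal{A}^{\op},\norm{\cdot}_{\min})$; consequently $\sigma$ is $\min$-bounded.

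Conversely, for $(2)\Rightarrow(1)$ I would fix a faithful representation $\mathcal{A}\subseteq\mathcal{B}(\mathcal{H})$ together with the conjugate representation of $\mathcal{A}^{\op}$ on $\bar{\mathcal{H}}$, so that $\mathcal{A}\otimes_{\min}\mathcal{A}^{\op}\subseteq\mathcal{B}(\mathcal{H}\otimes\bar{\mathcal{H}})$. Because $\sigma$ is the positive vector functional identified in the first step, $\min$-boundedness together with $\sigma(1\otimes1)=1$ lets it extend to a state $\phi$ on $\mathcal{A}\otimes_{\min}\mathcal{A}^{\op}$, and the Hahn--Banach/Arveson extension yields a state $\tilde\phi$ on $\mathcal{B}(\mathcal{H}\otimes\bar{\mathcal{H}})$. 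Setting $\rho(T):=\tilde\phi(T\otimes1)$ defines a state on $\mathcal{B}(\mathcal{H})$ with $\rho|_{\mathcal{A}}=\tau$. It then remains to verify $\rho(uTu^*)=\rho(T)$ for unitaries $u\in\mathcal{A}$; I would deduce this from the symmetry $\tau(ab)=\tau(ba)$, which renders $\phi$ invariant under conjugation by the unitary $u\otimes\bar u$ (the $\mathcal{A}^{\op}$-factor $\bar u$ being forced on us precisely to absorb the right multiplication), whence $\rho$ is a hypertrace and $\tau$ is amenable.

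I expect the genuinely substantive step to be $(1)\Rightarrow(2)$: converting abstract amenability, a mere state on $\mathcal{B}(\mathcal{H})$, into control by the \emph{minimal} norm. The route above outsources this to the matricial approximation equivalence, which is itself the technical heart of \cite[Theorem 6.2.7]{brown88c}; a self-contained argument would instead have to construct directly, from the hypertrace $\rho$, a state on $\mathcal{B}(\mathcal{H}\otimes\bar{\mathcal{H}})$ restricting to $\sigma$ on the minimal tensor product, and the bookkeeping among $\rho$, the left and right regular actions, and the conjugate space $\bar{\mathcal{H}}$ is the delicate part. A secondary but real subtlety is the conjugation-invariance check in $(2)\Rightarrow(1)$, where the identification of $\mathcal{A}^{\op}$ with operators on $\bar{\mathcal{H}}$ and the role of $\bar u$ must be tracked carefully to avoid adjoint and conjugation errors.
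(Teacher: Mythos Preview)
The paper does not supply a proof of this proposition; it is quoted from \cite{brown88c} (specifically \cite[Theorem~6.2.7]{brown88c}) and used as a black box. There is therefore no in-paper argument to compare against. That said, your sketch is essentially the Brown--Ozawa proof itself: for $(1)\Rightarrow(2)$ you invoke the equivalent matricial characterisation of amenability (u.c.p.\ maps $\varphi_i$ into matrices, asymptotically multiplicative in $\norm{\cdot}_2$ and recovering $\tau$), compose $\varphi_i\otimes\varphi_i^{\op}$ with the matricial state $X\otimes Y^{\op}\mapsto\mathrm{tr}_n(XY)$ to get $\min$-contractive approximants of $\sigma$, and pass to the limit; for $(2)\Rightarrow(1)$ you Arveson-extend $\sigma$ and slice to obtain a state $\rho$ on $\mathcal{B}(\mathcal{H})$ restricting to $\tau$. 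Both directions are correct in outline.

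Two small points deserve tightening. First, the ``conjugate representation of $\mathcal{A}^{\op}$ on $\bar{\mathcal{H}}$'' does not exist in the form you state: the assignment $a\mapsto\bar a$ on $\mathcal{B}(\bar{\mathcal{H}})$ is conjugate-linear and \emph{order-preserving}, hence neither a representation of $\mathcal{A}$ nor of $\mathcal{A}^{\op}$. This is harmless, since any faithful representation of $\mathcal{A}^{\op}$ (or, more cleanly, a direct Arveson extension of $\sigma$ to $\mathcal{B}(\mathcal{H})\otimes_{\min}\mathcal{A}^{\op}$ in the first variable only) does the job. Second, and more substantive, the inference ``$\phi$ is invariant under conjugation by a unitary $w$, hence $\rho$ is a hypertrace'' needs the observation that the \emph{extension} $\tilde\phi$ inherits this invariance: the relevant unitary is $w=u\otimes (u^*)^{\op}$, which satisfies $\tilde\phi(w)=\sigma(u\otimes(u^*)^{\op})=\tau(uu^*)=1$, so $w$ lies in the multiplicative domain of $\tilde\phi$ and therefore $\tilde\phi(w(T\otimes 1)w^*)=\tilde\phi(T\otimes 1)$. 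You flag this as a subtlety but do not quite name the mechanism; once this is made explicit, the argument is complete.
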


We denote $ \mathcal{A}(X,A) $ to be {\it{the universal  C${^*}$-algebra}} generated by projections $ e_a^x$ satisfying  $ \sum_{a\in A}e_a^x=1 $ for all $ x\in X $, and  $ \mathcal{A}(Y,B) $ to be the universal  C${^*}$-algebra generated by projections $ f_b^y $ satisfying $ \sum_{b\in B}f_b^y=1 $ for all $ y\in Y $.
What's more, we denote the $\bC$-subalgebra generated by all the $e_a^x$ as $\bC\langle X,A\rangle$, and similarly for $\bC\langle Y,B\rangle$. It's easy to deduce that both $\mathcal{A}(X,A)$ and $\mathcal{A}(Y,B)$ are separable, and this fact implies that $\mathcal{A}(X,A)\otimes_{\min}\mathcal{A}(Y,B)$ is also separable with respect to the minimal norm of the tensor product.

In \cite[Propsition 3.4]{fritz2012tsirelson} (see also \cite[Theorem 2.10]{paulsen2015quantum}), Fritz characterized quantum correlations in terms of C$^*$-algebra tensor products. Our main result is based on the following proposition: 
\begin{prop}\label{P2.1}\cite{fritz2012tsirelson}
	Suppose $ p=(p(a,b\mid x,y))_{X\times Y\times A\times B}\in\mathcal{C}_{ns}(X,Y,A,B) $, then the following statements are equivalent:
	\begin{enumerate}[(1)]
	\item $ p\in\mathcal{C}_{qa}(X,Y,A,B) $;
	\item There exists a state $ \tau $ on $ \mathcal{A}(X,A)\otimes_{\min}\mathcal{A}(Y,B) $ such that
	\begin{equation*}
	p(a,b\mid x,y)=\tau(e_a^x\otimes f_b^y).
	\end{equation*}
	\end{enumerate}
\end{prop}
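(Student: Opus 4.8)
The plan is to prove the two implications separately. The direction $(1)\Rightarrow(2)$ is essentially a compactness argument and is the easier one. Assuming $p\in\mathcal{C}_{qa}$, I fix $\varepsilon=1/n$ and take the finite-dimensional data $\mathcal{H}_A(\varepsilon),\mathcal{H}_B(\varepsilon),|\psi_\varepsilon\rangle$ together with the PVMs $\{P_a^x(\varepsilon)\}$, $\{Q_b^y(\varepsilon)\}$ supplied by (\ref{def_qa}). Since for each $x$ the family $\{P_a^x(\varepsilon)\}_a$ is a PVM summing to the identity, the universal property of $\mathcal{A}(X,A)$ produces a unital $*$-homomorphism $\pi_A^\varepsilon:\mathcal{A}(X,A)\to\mathcal{B}(\mathcal{H}_A(\varepsilon))$ with $e_a^x\mapsto P_a^x(\varepsilon)$, and similarly $\pi_B^\varepsilon$ for $\mathcal{A}(Y,B)$. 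Because tensor products of representations are continuous for the minimal norm, these combine into a $*$-homomorphism $\pi^\varepsilon:\mathcal{A}(X,A)\otimes_{\min}\mathcal{A}(Y,B)\to\mathcal{B}(\mathcal{H}_A(\varepsilon)\otimes\mathcal{H}_B(\varepsilon))$, and $\tau_\varepsilon:=\langle\psi_\varepsilon|\pi^\varepsilon(\cdot)|\psi_\varepsilon\rangle$ is a state satisfying $|\tau_\varepsilon(e_a^x\otimes f_b^y)-p(a,b\mid x,y)|<\varepsilon$. The state space of $\mathcal{A}(X,A)\otimes_{\min}\mathcal{A}(Y,B)$ is weak-$*$ compact by Banach--Alaoglu, so $(\tau_{1/n})_n$ has a weak-$*$ cluster point $\tau$; evaluating on the generators gives $\tau(e_a^x\otimes f_b^y)=p(a,b\mid x,y)$, which is $(2)$.

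For $(2)\Rightarrow(1)$ the key structural input is that $\mathcal{A}(X,A)$ and $\mathcal{A}(Y,B)$ are residually finite-dimensional (RFD). Indeed $\mathcal{A}(X,A)$ is the unital free product $\ast_{x\in X}\bC^{|A|}$ of finite-dimensional algebras, and free products of RFD C$^*$-algebras are RFD (a theorem of Exel and Loring). Hence there are faithful block-diagonal representations $\pi_A=\bigoplus_i\pi_A^{(i)}$ and $\pi_B=\bigoplus_j\pi_B^{(j)}$ with each block finite-dimensional, acting on $\mathcal{H}_A=\bigoplus_i\mathcal{H}_A^{(i)}$ and $\mathcal{H}_B=\bigoplus_j\mathcal{H}_B^{(j)}$. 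By the definition (\ref{eq2.10}) of the minimal norm, $\pi_A\otimes\pi_B$ embeds $\mathcal{A}(X,A)\otimes_{\min}\mathcal{A}(Y,B)$ isometrically into $\mathcal{B}(\mathcal{H}_A\otimes\mathcal{H}_B)$, so I may regard $\tau$ as a state on this subalgebra and extend it, by Hahn--Banach (a norm-preserving extension of a state is again a state), to a state $\tilde\tau$ on all of $\mathcal{B}(\mathcal{H}_A\otimes\mathcal{H}_B)$. Since normal states are weak-$*$ dense in the state space of $\mathcal{B}(\mathcal{H})$, I may write $\tilde\tau$ as a weak-$*$ limit of states $T\mapsto\mathrm{Tr}(\rho_n T)$ with $\rho_n$ density operators, which I take of finite rank.

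It remains to pass to an honest finite-dimensional vector model. Purifying $\rho_n$ on a (finite-dimensional) ancilla assigned to Bob's side gives a unit vector $|\Psi_n\rangle\in\mathcal{H}_A\otimes(\mathcal{H}_B\otimes\mathcal{K})$ with $\mathrm{Tr}(\rho_n\,S\otimes T)=\langle\Psi_n|S\otimes(T\otimes 1_{\mathcal{K}})|\Psi_n\rangle$, where $\pi_A(e_a^x)$ and $\pi_B(f_b^y)\otimes 1_{\mathcal{K}}$ are still PVMs. Because the representations are block-diagonal, truncating $|\Psi_n\rangle$ to finitely many blocks $\bigoplus_{i\in F}\mathcal{H}_A^{(i)}$ and $\bigoplus_{j\in G}\mathcal{H}_B^{(j)}$ alters it by an arbitrarily small amount (the block components are square-summable), and the compressions of $\pi_A(e_a^x)$ and $\pi_B(f_b^y)$ to these finite blocks remain PVMs summing to the identity on each block. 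As the generators are uniformly bounded, the resulting finite-dimensional vector-state values approximate $\langle\Psi_n|\cdots|\Psi_n\rangle$, hence approximate $\tau(e_a^x\otimes f_b^y)=p(a,b\mid x,y)$ within any prescribed tolerance, exhibiting $p$ as a limit of correlations in $\mathcal{C}_q$, i.e.\ $p\in\mathcal{C}_{qa}$.

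The delicate point throughout is the $(2)\Rightarrow(1)$ direction. One must chain together several weak-$*$ approximations (Hahn--Banach extension, density of normal states, purification) and then verify that the final finite truncation still produces genuine PVMs with the accumulated error in the generator values kept under control. The reliance on residual finite-dimensionality is exactly what makes the passage from an abstract state on the minimal tensor product to finite-dimensional tensor-product strategies possible: without a block-diagonal faithful representation the concluding truncation step would have no meaning, so securing and exploiting the RFD property is the crux of the argument.
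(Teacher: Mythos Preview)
The paper does not supply its own proof of this proposition: it is quoted from Fritz \cite{fritz2012tsirelson} (see also \cite{paulsen2015quantum}) as a preliminary fact, so there is no in-paper argument to compare against. Your $(1)\Rightarrow(2)$ direction is exactly the compactness argument the authors themselves invoke later in their proof of Theorem~\ref{independent} (construct $\tau_\varepsilon$ from the universal property and pass to a weak-$*$ cluster point), and your $(2)\Rightarrow(1)$ direction---RFD of $\mathcal{A}(X,A)=\ast_{x\in X}\bC^{|A|}$ via Exel--Loring, a faithful block-diagonal model of the min tensor, Hahn--Banach extension, weak-$*$ density of normal states, purification, and finite-block truncation---is the standard route and is correct as outlined.

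One small point worth tightening in a final write-up: after truncating $|\Psi_n\rangle$ to finitely many blocks you must renormalise, and you should record that the renormalisation factor tends to $1$ so that the error on the finitely many generators $e_a^x\otimes f_b^y$ is still controlled; likewise, the ``finite-rank'' choice of $\rho_n$ deserves one line (finite-rank density operators are norm-dense among all density operators, hence weak-$*$ dense among normal states). These are routine, not gaps.
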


In the case of mirror games, which is a special case of the imitation game, Lupini et al.  extended   in \cite[Theorem III.6]{kim2018synchronous} for synchronous games and got the following characterization for perfect quantum approximate correlations  \cite[Theore 6.4]{lupini2020perfect}:
\begin{prop}\label{Mirror}\cite{lupini2020perfect} 
	Let $\mathcal{G}$ be a mirror game. Suppose $ p=(p(a,b\mid x,y))_{X\times Y\times A\times B}\in\mathcal{C}_{ns}(\mathcal{G}) $, then the following statements are equivalent:
	\begin{enumerate}[(1)]
	\item $ p\in\mathcal{C}_{qa}(\mathcal{G}) $;
	\item There exist an amenable tracial state $ \tau:\mathcal{A}(X,A)\rightarrow\bC $ and a unital *-homomorphism 
    $\rho:\mathcal{A}(Y,B)\rightarrow\mathcal{A}(X,A)$ with 
    $\rho(\mathcal{S}_{Y,B})\subseteq\mathcal{S}_{X,A}$ 
    \footnote{There exists a typo in \cite{lupini2020perfect} where the order of $\mathcal{S}_{X,A}$ and $\mathcal{S}_{Y,B}$ is reversed. Here, we provide the correct form.}
    such that
	\begin{equation*}
	p(a,b\mid x,y)=\tau(e_a^x\rho(f_b^y)),~x\in X,y\in Y,a\in A,b\in B,
	\end{equation*}
	\end{enumerate}
    where $\mathcal{S}_{X,A}=\mathrm{span}\{e_a^x:~x\in X,A\in A\}$ and similarly for $\mathcal{S}_{Y,B}$.
\end{prop}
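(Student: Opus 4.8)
The plan is to establish the two implications separately. The implication (2)$\Rightarrow$(1) is the routine one, obtained by manufacturing a state on the minimal tensor product and quoting Proposition~\ref{P2.1}; the implication (1)$\Rightarrow$(2) carries the real content, and it is there that I expect the essential difficulty to lie.

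For (2)$\Rightarrow$(1), suppose we are given an amenable tracial state $\tau$ on $\mathcal{A}(X,A)$ and $\rho$ as in the statement. Since $\tau$ is amenable, Proposition~\ref{P2.2} furnishes a functional $\sigma$ on $\mathcal{A}(X,A)\otimes_{\min}\mathcal{A}(X,A)^{\op}$ with $\sigma(a\otimes b)=\tau(ab)$; using $\tau(ab)=\tau(ba)$ one checks $\sigma$ is positive, hence a state. The same underlying set map as $\rho$ is a unital $*$-homomorphism $\mathcal{A}(Y,B)^{\op}\to\mathcal{A}(X,A)^{\op}$, because reversing both products preserves multiplicativity, so $\mathrm{id}\otimes\rho$ is min-continuous by functoriality of the minimal tensor product. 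Since $\mathcal{A}(Y,B)$ is universal on projections and projections together with their partition-of-unity relations survive the passage to the opposite product, the generator-fixing map gives a $*$-isomorphism $\mathcal{A}(Y,B)\cong\mathcal{A}(Y,B)^{\op}$. Composing $\sigma$ with $\mathrm{id}\otimes\rho$ and with this isomorphism, we obtain a state $\omega$ on $\mathcal{A}(X,A)\otimes_{\min}\mathcal{A}(Y,B)$ satisfying $\omega(e_a^x\otimes f_b^y)=\tau(e_a^x\rho(f_b^y))=p(a,b\mid x,y)$. Proposition~\ref{P2.1} then gives $p\in\mathcal{C}_{qa}$, and since $p\in\mathcal{C}_{ns}(\mathcal{G})$ by hypothesis we conclude $p\in\mathcal{C}_{qa}(\mathcal{G})$.

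For (1)$\Rightarrow$(2), I would start from the state $\tau_0$ on $\mathcal{A}(X,A)\otimes_{\min}\mathcal{A}(Y,B)$ supplied by Proposition~\ref{P2.1}, pass to its GNS triple $(\mathcal{H},\pi,\xi)$, and write $\pi_\ell,\pi_r$ for the commuting actions of the two tensor legs. Perfectness together with (\ref{def_perfect_qa}) forces $\tau_0(e_a^x\otimes f_b^y)=0$ whenever $\lambda(x,y,a,b)=0$, while conditions (a) and (b) of Definition~\ref{D2.2}, sharpened by the mirror symmetry, encode the mutual determination of the two players' answers. The crux is to turn this mutual determination into the operator identity $\pi_r(f_b^y)\xi=\pi_\ell(\rho(f_b^y))\xi$ with $\rho(f_b^y)\in\mathcal{S}_{X,A}$: taking $\rho(f_b^y)$ to be the sum of the $e_a^x$ over the Alice-answers compatible with $b$ (as singled out by the separating questions in (a),(b)), one expands $\norm{\pi_r(f_b^y)\xi-\pi_\ell(\rho(f_b^y))\xi}^2$ and finds, exactly as in the synchronous case, that non-signalling and the vanishing of the losing entries collapse it to zero. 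One must then verify that $f_b^y\mapsto\rho(f_b^y)$ sends each PVM to a PVM in $\mathcal{A}(X,A)$, so that $\rho$ extends to a unital $*$-homomorphism by universality of $\mathcal{A}(Y,B)$. Granting the identity, setting $\tau:=\tau_0(\,\cdot\,\otimes 1)$ yields $\tau(e_a^x\rho(f_b^y))=\langle\pi_\ell(e_a^x)\pi_r(f_b^y)\xi,\xi\rangle=p(a,b\mid x,y)$, and the linkage of the left and right actions on $\xi$ promotes $\tau$ to a tracial state.

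The main obstacle is concentrated in the last step of (1)$\Rightarrow$(2): upgrading the tracial state $\tau$ to an \emph{amenable} one. The natural route is to re-express the multiplication functional $a\otimes c\mapsto\tau(ac)$ of Proposition~\ref{P2.2} through $\tau_0$ by means of $\rho$, so that its boundedness on the minimal tensor product is inherited from the fact that $\tau_0$ already lives on $\mathcal{A}(X,A)\otimes_{\min}\mathcal{A}(Y,B)$. This inheritance works cleanly only when the identification induced by $\rho$ is itself min-continuous in both directions --- if $\rho$ fails to be invertible one cannot simply pull $\tau_0$ back, and bounding $\tau(ac)$ for elements $c$ outside the controlled range of $\rho$ becomes delicate. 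This is precisely the continuity point flagged in the abstract, and I expect the genuine work to lie here rather than in the combinatorial construction of $\rho$.
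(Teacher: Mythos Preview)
The paper does not contain its own proof of Proposition~\ref{Mirror}: the result is simply quoted from \cite{lupini2020perfect} (their Theorem~6.4), with no argument given in the text. There is therefore nothing in this paper to compare your proposal against.

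That said, a brief comment on the proposal itself. Your (2)$\Rightarrow$(1) is fine and is the standard route. For (1)$\Rightarrow$(2), the outline via GNS and the operator identity $\pi_r(f_b^y)\xi=\pi_\ell(\rho(f_b^y))\xi$ is the right shape, but you have misread where the difficulty lies. The continuity issue flagged in the abstract and in Section~3 concerns the authors' attempted \emph{generalisation} to arbitrary imitation games (Proposition~\ref{P_conj}), not the mirror-game case of Proposition~\ref{Mirror}, which is already a theorem in \cite{lupini2020perfect}. In a mirror game one has the additional structure of maps $\xi:X\to Y$ and $\zeta:Y\to X$ that implement the mirroring, so the candidate $\rho(f_b^y)$ can be taken to be a single generator $e_a^x$ rather than a sum, and the amenability of $\tau$ is obtained by pulling the min-bounded state $\tau_0$ on $\mathcal{A}(X,A)\otimes_{\min}\mathcal{A}(Y,B)$ back along $\mathrm{id}\otimes\rho$ directly---no inverse of $\rho$ is needed, only its forward min-continuity. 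Your worry that ``if $\rho$ fails to be invertible one cannot simply pull $\tau_0$ back'' is exactly the obstruction the authors face for general imitation games in Section~3, but it does not arise for mirror games.
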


This characterization uses an amenable tracial state on a smaller algebra $\mathcal{A}(X,A)$ indexed by one player. Our aim is to generalize Proposition \ref{Mirror} to imitation games, i.e., using a smaller algebra "generated" by one player's projections to characterize the perfect quantum approximate strategy. In fact, the von Neumann algebra $\mathcal{U}$, which will be introduced in Theorem \ref{Thm3.1}, plays such a role. However, only for very special case we can get the amenable tracial state. We ask whether there is a way to generalize this theorem to the general case, or if there are other characterizations for the general case.



\section{Main Result}

Firstly we present a strengthened version of Proposition \ref{P2.1}.

\begin{thm}\label{independent}
	Let $ \mathcal{G}=(X,Y,A,B,\lambda) $ be an imitation game, and $ p\in\mathcal{C}_{ns}(\mathcal{G}) $ be a perfect non-signalling correlation. The following statements are equivalent:
	\begin{enumerate}[(1)]
		\item $p\in\mathcal{C}_{qa}(\mathcal{G})$;
		\item There exists a state $\tau$ on $\mathcal{A}(X,A)\otimes_{\min}\mathcal{A}(Y,B)$ such that 
		\begin{equation*}
		p(a,b\mid x,y)=\tau(e_a^x\otimes f_b^y),
		\end{equation*}
		and $\tau\big|_{\mathcal{A}(X,A)\otimes\mathbf{1}},~\tau\big|_{\mathbf{1}\otimes\mathcal{A}(Y,B)}$ are both tracial.
	\end{enumerate}
\end{thm}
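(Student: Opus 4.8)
The plan is to prove the two implications separately, with essentially all the work in $(1)\Rightarrow(2)$. The implication $(2)\Rightarrow(1)$ is immediate from Proposition \ref{P2.1}: a state $\tau$ as in $(2)$ is in particular a state on $\mathcal{A}(X,A)\otimes_{\min}\mathcal{A}(Y,B)$ inducing $p$, so Proposition \ref{P2.1} gives $p\in\mathcal{C}_{qa}(X,Y,A,B)$, and together with the standing hypothesis $p\in\mathcal{C}_{ns}(\mathcal{G})$ this yields $p\in\mathcal{C}_{qa}(\mathcal{G})$. The traciality of the restrictions plays no role in this direction.

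For $(1)\Rightarrow(2)$ I would start from a state $\tau_0$ on $\mathcal{A}(X,A)\otimes_{\min}\mathcal{A}(Y,B)$ with $p(a,b\mid x,y)=\tau_0(e_a^x\otimes f_b^y)$, furnished by Proposition \ref{P2.1}, and pass to its GNS triple $(\mathcal{H},\pi,\xi)$. Setting $E_a^x=\pi(e_a^x\otimes\mathbf{1})$ and $F_b^y=\pi(\mathbf{1}\otimes f_b^y)$ produces two families of PVMs that commute with one another, with $\xi$ a cyclic unit vector and $\tau_0(\cdot)=\langle\xi,\pi(\cdot)\xi\rangle$. Since $p$ is perfect and each $E_a^xF_b^y$ is a projection, whenever $\lambda(x,y,a,b)=0$ we get $\norm{E_a^xF_b^y\xi}^2=\langle\xi,E_a^xF_b^y\xi\rangle=p(a,b\mid x,y)=0$, hence the vanishing relation $E_a^xF_b^y\xi=0$ off the support of $\lambda$.

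The next step is to convert the imitation conditions into vector identities on $\xi$. The aim is a \emph{determination relation}: for each $x,a$ a single $y'$ and real coefficients $c_b\in\{0,1\}$ with $E_{a'}^{x'}\xi=\sum_b c_b F_b^{y'}\xi$, expressing that, at the question pair $(x',y')$, Bob's answer determines Alice's. Condition \ref{ca} is exactly what forces the relevant $F_{b}^{y'}\xi$ to lie in $\mathrm{ran}(E_{a'}^{x'})$ or $\ker(E_{a'}^{x'})$, and condition \ref{cb} gives the symmetric statement on the $F$-side. Granting such relations, one runs the ``flip'' argument familiar from the synchronous case: to prove $\tau_0|_{\mathcal{A}(X,A)\otimes\mathbf 1}$ tracial it suffices to show $\langle E_a^x\xi,E_{a'}^{x'}\xi\rangle\in\bR$ for all indices and then extend to words in the generators by density and continuity. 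The key computation is
\[
\langle E_a^x\xi,E_{a'}^{x'}\xi\rangle=\sum_b c_b\langle\xi,E_a^xF_b^{y'}\xi\rangle=\sum_b c_b\langle\xi,F_b^{y'}E_a^x\xi\rangle=\langle E_{a'}^{x'}\xi,E_a^x\xi\rangle,
\]
where the middle equality is the commutation $[E_a^x,F_b^{y'}]=0$; reality, hence traciality on generators, follows, and the symmetric argument via condition \ref{cb} handles $\tau_0|_{\mathbf 1\otimes\mathcal{A}(Y,B)}$.

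The main obstacle is precisely the determination relation. Conditions \ref{ca} and \ref{cb} are only \emph{local}: for each \emph{pair} of answers a possibly different question witnesses the required disjointness, whereas the flip needs a \emph{single} $y'$ (per $x'$) for which every column of $\lambda(x',y',\cdot,\cdot)$ co-wins with at most one of Alice's answers, so that $F_b^{y'}\xi$ is an eigenvector of $E_{a'}^{x'}$. Promoting pairwise separation to such a globally consistent relation is delicate, and the arbitrary GNS vector supplied by Proposition \ref{P2.1} is not a priori tracial for either subalgebra. I expect the crux to lie here, and I would either modify $\tau_0$ or instead symmetrize over the approximating finite-dimensional models underlying $p\in\mathcal{C}_{qa}(\mathcal{G})$ (replacing the state vectors by maximally entangled ones paired through the mutual determination, so the reduced states become genuinely tracial before passing to a weak$^*$ limit on the separable algebra $\mathcal{A}(X,A)\otimes_{\min}\mathcal{A}(Y,B)$); this limiting step is presumably also where the continuity difficulties mentioned in the abstract arise.
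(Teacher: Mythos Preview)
Your direction $(2)\Rightarrow(1)$ is fine and matches the paper. For $(1)\Rightarrow(2)$, your GNS approach is viable, but the gap you identify is real and your proposed fix (find a \emph{single} $y'$ with $E_{a'}^{x'}\xi=\sum_b c_b F_b^{y'}\xi$) does not work in general: the imitation conditions only separate answers pairwise, so no single question need do the job. The missing idea is to combine the information from \emph{all} questions by taking a meet of projections. Define ${}^y\Xi_a^x=\sum_{b:\lambda(x,y,a,b)=1}F_b^y$ and set $\Xi_a^x=\bigwedge_{y\in Y}{}^y\Xi_a^x$ (this lives in the von Neumann algebra generated by the $F$'s, hence still commutes with every $E$). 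From your vanishing relation $E_a^xF_b^y\xi=0$ one gets ${}^y\Xi_a^x E_a^x\xi=E_a^x\xi$ for every $y$, while for $a'\neq a$ condition~\ref{ca} supplies \emph{some} $y$ with ${}^y\Xi_a^x E_{a'}^x\xi=0$; the meet then satisfies $\Xi_a^x E_a^x\xi=E_a^x\xi$ and $\Xi_a^x E_{a'}^x\xi=0$ for all $a'\neq a$, whence $\Xi_a^x\xi=E_a^x\xi$. This is exactly your determination relation, and your flip computation now goes through verbatim (and by induction on word length extends to all of $\pi(\mathcal A(X,A)\otimes\mathbf 1)$). The symmetric construction with $\Pi_b^y=\bigwedge_{x}{}^x\Pi_b^y$ handles the other factor.

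The paper's own proof of this theorem is organized differently: rather than GNS on a single state, it works directly with the finite-dimensional approximants $(P_a^x(\varepsilon),Q_b^y(\varepsilon),\psi_\varepsilon)$, builds $\Pi_b^y(\varepsilon)=\bigwedge_x{}^x\Pi_b^y(\varepsilon)$ there, proves $\bigl\|\bigl(1\otimes Q_b^y(\varepsilon)-\Pi_b^y(\varepsilon)\otimes 1\bigr)\psi_\varepsilon\bigr\|=O(\varepsilon^{1/2})$, deduces the approximate trace identity $\abs{\langle\psi_\varepsilon|\,[1\otimes Q_{b_1}^{y_1},\,1\otimes Q_{b_2}^{y_2}]\,|\psi_\varepsilon\rangle}=O(\varepsilon^{1/2})$, and then passes to a weak$^*$ cluster point of the resulting states on $\mathcal A(X,A)\otimes_{\min}\mathcal A(Y,B)$. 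So the paper's route is your second suggested alternative (symmetrize in the approximants, then limit), while your GNS route---once patched with the meet---is in fact the argument the paper uses later as Claim~1 in the proof of Theorem~\ref{Thm3.1}, and it shows the slightly stronger fact that \emph{any} state on the min tensor product inducing a perfect $p$ already has tracial restrictions. Finally, the ``continuity difficulties'' in the abstract are unrelated to this theorem; they concern the attempt in Section~3 to obtain an amenable tracial state on a single-player algebra.
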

\begin{proof}
	(2)$\Rightarrow$(1) can be directly obtained from Proposition \ref{P2.1}. Now we prove the opposite direction.
	
	Since $p\in\mathcal{C}_{qa}(\mathcal{G})$, by (\ref{def_qa}) we know that for every $\varepsilon>0$, there exist finite dimensional Hilbert space $\mathcal{H}_1(\varepsilon),\mathcal{H}_2(\varepsilon)$, a unit vector $\psi_{\varepsilon}$, PVM $\{P_a^x(\varepsilon): a\in A\}\subset\mathcal{B}(\mathcal{H}_1(\varepsilon))$ for every $x\in X$, $\{Q_b^y(\varepsilon): b\in B\}\subset\mathcal{B}(\mathcal{H}_2(\varepsilon))$ for every $y\in Y$ such that 
	\begin{equation*}
	\abs{p(a,b\mid x,y)-\langle\psi_{\varepsilon}|P_a^x(\varepsilon)\otimes Q_b^y(\varepsilon)|\psi_{\varepsilon}\rangle}<\varepsilon.
	\end{equation*} 
	For any $x\in X,~y\in Y$ and $b\in B$, define
	\begin{equation*}
	{}^x\Pi_b^y(\varepsilon)=\sum_{a \in A, \lambda(x, y, a, b)=1}P_a^x(\varepsilon),
	\end{equation*}
	then ${}^x\Pi_b^y(\varepsilon)$ is also a projection in $\mathcal{B}(\mathcal{H}_1(\varepsilon))$. Firstly by (\ref{def_perfect_qa}) we have
	\begin{align*}
	&\quad	\langle\psi_{\varepsilon}|(1-{}^x\Pi_b^y(\varepsilon))\otimes Q_b^y(\varepsilon)|\psi_{\varepsilon}\rangle\\
	&=\sum_{a \in A, \lambda(x, y, a, b)=0}\langle\psi_{\varepsilon}|P_a^x(\varepsilon\otimes Q_b^y(\varepsilon))|\psi_{\varepsilon}\rangle\\
	&=\sum_{a \in A, \lambda(x, y, a, b)=0}p(a,b\mid x,y)\\
	&<|A|\varepsilon.
	\end{align*}
	Since $(1-{}^x\Pi_b^y(\varepsilon))\otimes Q_b^y(\varepsilon)$ is an idempotent element in $\mathcal{B}(\mathcal{H}_1(\varepsilon)\otimes\mathcal{H}_2(\varepsilon))$, we know that
	\begin{equation*}
	\norm{(1-{}^x\Pi_b^y(\varepsilon))\otimes Q_b^y(\varepsilon)|\psi_{\varepsilon}\rangle}=
	\langle\psi_{\varepsilon}|(1-{}^x\Pi_b^y(\varepsilon))\otimes Q_b^y(\varepsilon)|\psi_{\varepsilon}\rangle^{\frac{1}{2}}<M_1\varepsilon^{\frac{1}{2}}.
	\end{equation*}
	By the assumption of imitation games, for $b \neq b^{\prime}$ and $y \in Y$ there exists $x \in X$ such that 
	\begin{equation*}
		\sum_{a \in A} \lambda(x, y, a, b) \lambda\left(x, y, a, b^{\prime}\right)=0.
	\end{equation*}
	For such a choice of $x, y, b, b^{\prime}$, we have
	\begin{equation*}
		\left\langle\psi_{\varepsilon}\left|{}^x\Pi_b^y(\varepsilon)\otimes Q_{b^{\prime}}^{y}(\varepsilon)\right| \psi_{\varepsilon}\right\rangle=\sum_{a \in A, \lambda(x, y, a, b)=1}\left\langle\psi_{\varepsilon}\left|P_a^x(\varepsilon) Q_{b^{\prime}}^y(\varepsilon)\right| \psi_{\varepsilon}\right\rangle=\sum_{a \in A, \lambda(x, y, a, b)=1}p(a,b^{\prime}\mid x,y).
	\end{equation*}
	By the definition of imitation game we obtain that $p(a,b^{\prime}\mid x,y)<\varepsilon$ when $\lambda(x, y, a, b)=1$, so
	\begin{equation*}
	\left\langle\psi_{\varepsilon}\left|{}^x\Pi_b^y(\varepsilon)\otimes Q_{b^{\prime}}^{y}(\varepsilon)\right| \psi_{\varepsilon}\right\rangle<|A|\varepsilon,
	\end{equation*}
	i.e
	\begin{equation*}
	\|{}^x\Pi_b^y(\varepsilon)\otimes Q_{b^{\prime}}^{y}(\varepsilon)| \psi_{\varepsilon}\rangle\|<M_2\varepsilon^{\frac{1}{2}}.
	\end{equation*}
	Define
	\begin{equation*}
	\Pi_b^y(\varepsilon)=\underset{x\in X}{\bigwedge}{}^x\Pi_b^y,
	\end{equation*}
	by De Morgan's laws we have
	\begin{equation*}
	1-\Pi_b^y(\varepsilon)=\underset{x\in X}{\bigvee}(1-{}^x\Pi_b^y(\varepsilon)).
	\end{equation*}
	Thus we obtain
	\begin{align*}
		\quad & \norm{(1-\Pi_b^y(\varepsilon))\otimes Q_b^y(\varepsilon)|\psi_{\varepsilon}\rangle} \\
		= & \norm{(\underset{x\in X}{\bigvee}(1-{}^x\Pi_b^y(\varepsilon)))\otimes Q_b^y(\varepsilon)|\psi_{\varepsilon}\rangle}\\
		\leq&\sum_{x\in X}\norm{(1-{}^x\Pi_b^y(\varepsilon))\otimes Q_b^y(\varepsilon)|\psi_{\varepsilon}\rangle}\\
		<& M_3\varepsilon^{\frac{1}{2}}.
	\end{align*}
	For $b^{\prime}\neq b\in B$, we also have
	\begin{equation*}
	\norm{\Pi_b^y(\varepsilon)\otimes Q_{b^{\prime}}^{y}(\varepsilon)| \psi_{\varepsilon}\rangle}\leq\|{}^x\Pi_b^y(\varepsilon)\otimes Q_{b^{\prime}}^{y}(\varepsilon)| \psi_{\varepsilon}\rangle\|<M_4\varepsilon^{\frac{1}{2}}.
	\end{equation*}
	Now we are ready to prove the following inequality:
	\begin{align*}
	\quad&\norm{\Big(1\otimes Q_b^y\left(\varepsilon\right)-\Pi_b^y\left(\varepsilon\right)\otimes 1\Big)|\psi_{\varepsilon}\rangle}\\
	=&\norm{\Big(1\otimes Q_b^y\left(\varepsilon\right)-\Pi_b^y\left(\varepsilon\right)\otimes \sum_{b^{\prime}\in B}Q_{b^{\prime}}^{y}(\varepsilon)\Big)|\psi_{\varepsilon}\rangle}\\
	\leq&\norm{(1-\Pi_b^y(\varepsilon))\otimes Q_b^y(\varepsilon)|\psi_{\varepsilon}\rangle}+\sum_{b^{\prime}\neq b}\norm{\Pi_b^y(\varepsilon)\otimes Q_{b^{\prime}}^{y}(\varepsilon)| \psi_{\varepsilon}\rangle}\\
	<&M_5\varepsilon^{\frac{1}{2}}
	\end{align*}
	Therefore, for every $y_1,y_2\in Y$ and $b_1,b_2\in B$, we have
	\begin{align*}
	\quad&\abs{\langle\psi_{\varepsilon}|\Big((1\otimes Q_{b_1}^{y_1}(\varepsilon))(1\otimes Q_{b_2}^{y_2}(\varepsilon))-(1\otimes Q_{b_1}^{y_1}(\varepsilon))(\Pi_{b_2}^{y_2}(\varepsilon)\otimes 1)\Big)|\psi_{\varepsilon}\rangle}\\
	=&\abs{\langle (1\otimes Q_{b_1}^{y_1}(\varepsilon))\psi_{\varepsilon}\mid\Big(1\otimes Q_{b_2}^{y_2}(\varepsilon)-\Pi_{b_2}^{y_2}(\varepsilon)\otimes 1\Big)\psi_{\varepsilon} \rangle}\\
	\leq&\norm{(1\otimes Q_{b_1}^{y_1}(\varepsilon))|\psi_{\varepsilon}\rangle}\cdot\norm{\Big(1\otimes Q_{b_2}^{y_2}\left(\varepsilon\right)-\Pi_{b_2}^{y_2}\left(\varepsilon\right)\otimes 1\Big)|\psi_{\varepsilon}\rangle}\\
	<&M_6\varepsilon^{\frac{1}{2}},
	\end{align*}
	
	\begin{equation*}
	\langle\psi_{\varepsilon}|(1\otimes Q_{b_1}^{y_1}(\varepsilon))(\Pi_{b_2}^{y_2}(\varepsilon)\otimes 1)|\psi_{\varepsilon}\rangle=\langle\psi_{\varepsilon}|(\Pi_{b_2}^{y_2}(\varepsilon)\otimes 1)(1\otimes Q_{b_1}^{y_1}(\varepsilon))|\psi_{\varepsilon}\rangle,
	\end{equation*}
	and similarly
	\begin{equation*}
	\abs{\langle\psi_{\varepsilon}|\Big((\Pi_{b_2}^{y_2}(\varepsilon)\otimes 1)(1\otimes Q_{b_1}^{y_1}(\varepsilon))-(1\otimes Q_{b_2}^{y_2}(\varepsilon))(1\otimes Q_{b_1}^{y_1}(\varepsilon))\Big)|\psi_{\varepsilon}\rangle}<M_7\varepsilon^{\frac{1}{2}}.
	\end{equation*}
	The above inequalities imply that
	\begin{equation}\label{appr_trace}
	\abs{\langle\psi_{\varepsilon}|\Big((1\otimes Q_{b_1}^{y_1}(\varepsilon))(1\otimes Q_{b_2}^{y_2}(\varepsilon))-(1\otimes Q_{b_2}^{y_2}(\varepsilon))(1\otimes Q_{b_1}^{y_1}(\varepsilon))\Big)|\psi_{\varepsilon}\rangle}<M_8\varepsilon^{\frac{1}{2}}.
	\end{equation}
	
	Now we can find the state $\tau$. After choosing $\{P_a^x(\varepsilon): a\in A\}$ and $\{Q_b^y(\varepsilon): b\in B\}$, we obtain two *-homomorphism 
	\begin{align*}
	\pi_{\varepsilon,A}:\mathcal{A}(X,A)&\rightarrow\mathcal{B}(\mathcal{H}_1(\varepsilon))\\e_a^x&\mapsto P_a^x(\varepsilon),
	\end{align*} 
	and
	\begin{align*}
		\pi_{\varepsilon,B}:\mathcal{A}(Y,B)&\rightarrow\mathcal{B}(\mathcal{H}_2(\varepsilon))\\f_b^y&\mapsto Q_b^y(\varepsilon).
	\end{align*}
	By \cite[Corollary 3.5.5]{brown88c}, we know
	$\pi_{\varepsilon,A}\otimes_{\min}\pi_{\varepsilon,B}:\mathcal{A}(X,A)\otimes_{\min}\mathcal{A}(Y,B)\rightarrow\mathcal{B}(\mathcal{H}_1(\varepsilon)\otimes\mathcal{H}_2(\varepsilon))$ is a bounded *-homomorphism, thus for every $\varphi>0$,
	\begin{align*}
		\tau_{\varepsilon}:\mathcal{A}(X,A)\otimes_{\min}\mathcal{A}(Y,B)&\rightarrow\bC\\
		e_a^x\otimes f_b^y&\mapsto\langle\psi_{\varepsilon}|P_a^x(\varepsilon)\otimes|\psi_{\varepsilon}\rangle
	\end{align*}
	is a state on $\mathcal{A}(X,A)\otimes_{\min}\mathcal{A}(Y,B)$. Take $\varepsilon$ as $\{\frac{1}{n}:n\in\bN\}$, we get a sequences of states, and by Banach-Alaoglu Theorem we know that there exists a weak* cluster point of $\{\tau_{\frac{1}{n}}\}$, denoted as $\tau$. Since $$\abs{\tau_{\varepsilon}((1\otimes f_{b_1}^{y_1})(1\otimes f_{b_2}^{y_2})-(1\otimes f_{b_2}^{y_2})(1\otimes f_{b_1}^{y_1}))}<M_8\varepsilon^{\frac{1}{2}},$$
	from the arbitrariness of $\varepsilon$ we obtain 
	$$
	\tau((1\otimes f_{b_1}^{y_1})(1\otimes f_{b_2}^{y_2})-(1\otimes f_{b_2}^{y_2})(1\otimes f_{b_1}^{y_1}))=0,
	$$
	and by induction we can prove that $\tau\big|_{\mathbf{1}\otimes\mathcal{A}(Y,B)}$ is tracial. The property that $\tau\big|_{\mathcal{A}(X,A)\otimes 1}$ is tracial can be proved similarly. 
\end{proof}

Next we provide a sufficient condition for $p\in\mathcal{C}_{qa}(\mathcal{G})$ when $\mathcal{G}$ is an imitation game.
\begin{thm}\label{Thm3.2}
	Let $ \mathcal{G}=(X,Y,A,B,\lambda) $ be an imitation game, and $ p\in\mathcal{C}_{ns}(\mathcal{G}) $ be a perfect non-signalling correlation. If there exist a von Neumann algebra $ \mathcal{U} $, a $ * $-homomorphism $ \rho:\bC\langle X,A\rangle\otimes_{\alg}\bC\langle Y,B\rangle\rightarrow \mathcal{U} $ and an amenable tracial state $ \tau $ on $ \mathcal{U} $ such that 
	\begin{equation}\label{eq3.0}
		\tau\circ\rho(e_a^x\otimes f_b^y)=p(a,b\mid x,y),
	\end{equation}
	then we have $p\in\mathcal{C}_{qa}(\mathcal{G})$.
\end{thm}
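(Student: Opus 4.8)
The plan is to invoke Proposition~\ref{P2.1}: it suffices to produce a state on $\mathcal{A}(X,A)\otimes_{\min}\mathcal{A}(Y,B)$ sending $e_a^x\otimes f_b^y$ to $p(a,b\mid x,y)$, since then $p\in\mathcal{C}_{qa}(X,Y,A,B)$, and combined with the hypothesis $p\in\mathcal{C}_{ns}(\mathcal{G})$ this yields $p\in\mathcal{C}_{qa}(\mathcal{G})=\mathcal{C}_{ns}(\mathcal{G})\cap\mathcal{C}_{qa}(X,Y,A,B)$. Note that the imitation-game structure is not actually exploited in this direction; only $p\in\mathcal{C}_{ns}(\mathcal{G})$ and the data $(\mathcal{U},\rho,\tau)$ enter. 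I will also assume $\rho$ is unital (equivalently $\tau(\rho(1))=1$), which is forced by the normalization $\sum_{a,b}p(a,b\mid x,y)=1$, or arrange it by passing to the corner $\rho(1)\mathcal{U}\rho(1)$.

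First I would unpack amenability through Proposition~\ref{P2.2}: the map $\sigma:\mathcal{U}\otimes_{\alg}\mathcal{U}^{\op}\to\bC$, $u\otimes v\mapsto\tau(uv)$, is bounded for the minimal norm and hence extends to $\mathcal{U}\otimes_{\min}\mathcal{U}^{\op}$. I would then verify that this extension is in fact a \emph{state}: unitality is $\sigma(1\otimes 1)=\tau(1)=1$, and positivity holds already on the algebraic tensor product via the trace property, since for $\xi=\sum_i u_i\otimes v_i^{\op}$ one computes $\sigma(\xi^*\xi)=\tau\big((\sum_i u_i v_i)^*(\sum_j u_j v_j)\big)\ge 0$; positivity then survives the bounded extension because the positive cone is closed.

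Next I would manufacture the two $*$-homomorphisms needed to pull $\sigma$ back. The elements $\rho(e_a^x\otimes 1)$ are projections in $\mathcal{U}$ (images of projections under the $*$-homomorphism $\rho$) summing to $1$ over $a$ for each fixed $x$; by the universal property of $\mathcal{A}(X,A)$ this yields a unital $*$-homomorphism $\tilde\rho_A:\mathcal{A}(X,A)\to\mathcal{U}$ with $\tilde\rho_A(e_a^x)=\rho(e_a^x\otimes 1)$. The elements $\rho(1\otimes f_b^y)$ are likewise projections summing to $1$ over $b$, and they remain projections in the opposite algebra $\mathcal{U}^{\op}$ (the involution is unchanged and idempotency is symmetric under $\op$); the universal property of $\mathcal{A}(Y,B)$ then gives a unital $*$-homomorphism $\rho_B':\mathcal{A}(Y,B)\to\mathcal{U}^{\op}$ with $\rho_B'(f_b^y)=\rho(1\otimes f_b^y)$. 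Routing the second player through $\mathcal{U}^{\op}$ instead of $\mathcal{U}$ is the key bookkeeping point: it is exactly what lets $\sigma$ read off the product $\tau(uv)$ with the correct orientation.

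Finally, by functoriality of the minimal tensor product (\cite[Corollary~3.5.5]{brown88c}) the pair $(\tilde\rho_A,\rho_B')$ induces a unital $*$-homomorphism $\Psi:=\tilde\rho_A\otimes\rho_B':\mathcal{A}(X,A)\otimes_{\min}\mathcal{A}(Y,B)\to\mathcal{U}\otimes_{\min}\mathcal{U}^{\op}$, so $\sigma\circ\Psi$ is a state, and on generators
\[
\sigma\circ\Psi(e_a^x\otimes f_b^y)=\tau\big(\rho(e_a^x\otimes 1)\rho(1\otimes f_b^y)\big)=\tau\circ\rho(e_a^x\otimes f_b^y)=p(a,b\mid x,y),
\]
using multiplicativity of $\rho$ on $(e_a^x\otimes 1)(1\otimes f_b^y)=e_a^x\otimes f_b^y$. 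Proposition~\ref{P2.1} then gives $p\in\mathcal{C}_{qa}(X,Y,A,B)$ and hence $p\in\mathcal{C}_{qa}(\mathcal{G})$. I expect the main obstacle to lie in the second paragraph, namely upgrading the mere min-boundedness supplied by Proposition~\ref{P2.2} to $\sigma$ being a genuine state through the trace-positivity computation; once that is in hand, the remainder is formal manipulation of universal properties and tensor-product functoriality.
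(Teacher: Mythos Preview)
Your proposal is correct and follows essentially the same route as the paper: obtain from amenability a min-continuous functional $\sigma$ on $\mathcal{U}\otimes_{\min}\mathcal{U}^{\op}$ with $\sigma(a\otimes b^{\op})=\tau(ab)$, route $\mathcal{A}(X,A)$ into $\mathcal{U}$ and $\mathcal{A}(Y,B)$ into $\mathcal{U}^{\op}$ via the universal property, tensor using \cite[Corollary~3.5.5]{brown88c}, and apply Proposition~\ref{P2.1}. The only cosmetic difference is that the paper invokes the full \cite[Theorem~6.2.7]{brown88c} to get $\sigma$ directly as a state, whereas you cite Proposition~\ref{P2.2} (which only asserts boundedness) and then supply the trace-based positivity computation yourself; that computation is correct and the conclusion is the same.
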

\begin{proof}
		Since $ \tau $ is amenable on $ \mathcal{U} $, by \cite[Theorem 6.2.7]{brown88c} there is a state 
	\[\sigma:\mathcal{U}\otimes_{\min}\mathcal{U}^{\op}\rightarrow\bC \]
	such that
	\[ \sigma(a\otimes b^{\op})=\tau(ab).\]
	Using $ \rho $,  we can also define the following $*$-homomorphisms 
	\begin{align*}
		\rho_1:\bC\langle X,A\rangle&\rightarrow\mathcal{U}\\
		e_{a_1}^{x_1}\cdots e_{a_n}^{x_n}&\mapsto\rho(e_{a_1}^{x_1}\cdots e_{a_n}^{x_n}\otimes 1)
	\end{align*}
	and
	\begin{align*}
		\rho_2:\bC\langle Y,B\rangle&\rightarrow\mathcal{U}^{\op}\\
		f_{b_1}^{y_1}\cdots f_{b_n}^{y_n}&\mapsto\rho(1\otimes f_{b_n}^{y_n}\cdots f_{b_1}^{y_1})^{\op}.
	\end{align*}
	
	Using the universal property we can extend $\rho_1$ and $\rho_2$ continuously to $\mathcal{A}(X,A)$ and $\mathcal{A}(Y,B)$, which are denoted as $\theta_1$ and $\theta_2$, respectively.
	By \cite[Corrollary 3.5.5]{brown88c} again we can get:
	\begin{equation*}
		\theta:\mathcal{A}(X,A)\otimes_{\min}\mathcal{A}(Y,B)\rightarrow\mathcal{U}\otimes_{\min}\mathcal{U}^{\op},~~a\otimes b\mapsto \theta_1(a)\otimes\theta_2(b)
	\end{equation*} 
	is a bounded $*$-homomorphism.
	
	Let us define 
	\[\varphi:\mathcal{A}(X,A)\otimes_{\min}\mathcal{A}(Y,B)\rightarrow\bC \]
	to be $ \varphi=\sigma\circ\theta $.
	Then by its construction, we know that $ \varphi $ is a state.  We have
	\begin{equation*}
		\varphi(e_a^x\otimes f_b^y)=\sigma(\theta_1(e_a^x)\otimes\theta_2(f_b^y))=\tau(\rho(e_a^x\otimes 1)\rho(1\otimes f_b^y))=\tau\circ\rho(e_a^x\otimes f_b^y)=p(a,b\mid x,y).
	\end{equation*}
	This shows that $ p\in\mathcal{C}_{qa}(\mathcal{G}) $.
\end{proof}

The form of Theorem \ref{Thm3.2} is similar to \cite[Theorem 3.2]{helton2019algebras}, however, the latter theorem is a necessary and sufficient condition, so we can reasonably conjecture that the converse of Theorem \ref{Thm3.2} is also true. However, in the process of proving this, we encountered difficulties and only obtained a conclusion for the following special case.

\begin{thm}\label{Thm3.1}
	Let $ \mathcal{G}=(X,Y,A,B,\lambda) $ be an imitation game, $|X|=|A|=2$ and $ p\in\mathcal{C}_{qa}(\mathcal{G}) $. Then from Proposition \ref{P2.1} we know that there exists a state $ \varphi $ on $ \mathcal{A}(X,A)\otimes_{\min}\mathcal{A}(Y,B) $ such that 
	\begin{equation*}
		\varphi(e_a^x\otimes f_b^y)=p(a,b\mid x,y).
	\end{equation*}
	The GNS construction \cite[Theorem 4.5.2]{kadison1997fundamentals} of $ \varphi $ produce a Hilbert space $ \mathcal{H} $, a unit vector $ |\psi\rangle\in \mathcal{H} $ and a $ * $-representation 
	\begin{equation}\label{pi}
		\pi:\mathcal{A}(X,A)\otimes_{\min}\mathcal{A}(Y,B)\rightarrow\mathcal{B}(\mathcal{H}) 
	\end{equation}
	such that 
	\[\varphi(a)=\langle\psi|\pi(a)|\psi\rangle,~\forall a\in\mathcal{A}(X,A)\otimes_{\min}\mathcal{A}(Y,B).\] 
	Let 
	\begin{equation}\label{UB}
		\mathfrak{A}:=\pi(\mathcal{A}(X,A)\otimes\mathbf{1}),~\mathfrak{B}:=\pi(\mathbf{1}\otimes\mathcal{A}(Y,B)).
	\end{equation}	
	It's obvious that $ \mathfrak{A} $ and $ \mathfrak{B} $ are both C$ ^* $-algebras. Suppose that $\pi\big|_{\mathcal{A}(X,A)\otimes 1}:=\pi_1$ and $\pi\big|_{1\otimes\mathcal{A}(Y,B)}:=\pi_2$ are injective. Then we have the following conclusion: 
	there exist a von Neumann algebra $ \mathcal{U} $, a $ * $-homomorphism $ \rho:\bC\langle X,A\rangle\otimes_{\alg}\bC\langle Y,B\rangle\rightarrow \mathcal{U} $ and an amenable tracial state $ \tau $ on $ \mathcal{U} $ such that 
	\begin{equation}\label{eq3.1}
		\tau\circ\rho(e_a^x\otimes f_b^y)=p(a,b\mid x,y).
	\end{equation}

	 
%
\end{thm}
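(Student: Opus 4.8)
The plan is to realize Bob's projections inside Alice's von Neumann algebra as \emph{central} elements, so that a single amenable trace on that one algebra reproduces $p$. First I would invoke Theorem \ref{independent} (rather than merely Proposition \ref{P2.1}) to choose $\varphi$ so that the marginal $\varphi|_{\mathcal{A}(X,A)\otimes\mathbf{1}}$ is tracial; then the vector state $\tau:=\langle\psi|\cdot|\psi\rangle$ is tracial on $\mathfrak{A}$ and extends by normality to a faithful normal tracial state $\tau$ on $M:=\mathfrak{A}''=\pi(\mathcal{A}(X,A)\otimes\mathbf{1})''$. I take $\mathcal{U}:=M$. Using the injectivity of $\pi_1$ I arrange that $|\psi\rangle$ is separating for $M$ (replacing $M$ by its support projection if needed), which lets me upgrade vector identities at $|\psi\rangle$ to operator identities.

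The heart of the argument is the folding. Write $E_a^x:=\pi(e_a^x\otimes\mathbf{1})\in M$, $Q_b^y:=\pi(\mathbf{1}\otimes f_b^y)\in\mathfrak{B}\subseteq M'$, and $P_x^{b,y}:=\sum_{a:\lambda(x,y,a,b)=1}E_a^x\in M$. Passing the estimate $\langle\psi_\varepsilon|(1-{}^x\Pi_b^y(\varepsilon))\otimes Q_b^y(\varepsilon)|\psi_\varepsilon\rangle<|A|\varepsilon$ from the proof of Theorem \ref{independent} to the weak\textsuperscript{*} limit gives $\varphi((\mathbf{1}-P_x^{b,y})\otimes f_b^y)=0$ for every $x$, i.e. $Q_b^y|\psi\rangle\in\mathrm{ran}\,P_x^{b,y}$ for every $x\in X$; hence, with $\Pi_b^y:=\bigwedge_{x\in X}P_x^{b,y}\in M$ (meet of projections in the von Neumann algebra), $Q_b^y|\psi\rangle\in\mathrm{ran}\,\Pi_b^y$. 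Imitation condition \ref{cb} supplies, for $b\neq b'$, an $x$ with disjoint answer-supports, so $P_x^{b,y}P_x^{b',y}=0$ and therefore $\Pi_b^y\Pi_{b'}^y=0$; combined with $\sum_b Q_b^y|\psi\rangle=|\psi\rangle$ this forces $\Pi_b^y|\psi\rangle=Q_b^y|\psi\rangle$ and $\sum_b\Pi_b^y=\mathbf{1}$. Thus $\{\Pi_b^y\}_b$ is a PVM in $M$ realizing Bob at $|\psi\rangle$. I emphasize that this part works for arbitrary $|X|,|A|$.

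Now I specialize to $|X|=|A|=2$, where $M$ is generated by the two projections $E_{a_0}^{x_1},E_{a_0}^{x_2}$, i.e.\ a two-projection von Neumann algebra, and each $P_x^{b,y}\in\{0,E_{a_0}^x,E_{a_1}^x,\mathbf{1}\}$. The decisive structural fact is that in a two-projection von Neumann algebra the meet of a sub-projection of one generator with a sub-projection of the other is central (it is one of the joint eigenspaces, which reduces the whole algebra); hence each $\Pi_b^y$ lies in the center and commutes with every $E_a^x$. I then define $\rho:\bC\langle X,A\rangle\otimes_{\alg}\bC\langle Y,B\rangle\to M=\mathcal{U}$ by $\rho(e_a^x\otimes\mathbf{1})=E_a^x$, $\rho(\mathbf{1}\otimes f_b^y)=\Pi_b^y$; centrality makes the two families commute and the PVM relations make $\rho$ a well-defined unital $*$-homomorphism. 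Amenability of $\tau$ is automatic here: for $|X|=|A|=2$ the algebra $\mathcal{A}(X,A)$ is the group C${^*}$-algebra of the amenable infinite dihedral group $\bZ_2*\bZ_2$, hence nuclear, so $M$ is injective and any normal trace is amenable (take $\rho=\tau\circ\mathcal{E}$ for a conditional expectation $\mathcal{E}:\mathcal{B}(\mathcal{H})\to M$). The computation $\tau(\rho(e_a^x\otimes f_b^y))=\langle\psi|E_a^x\Pi_b^y|\psi\rangle=\langle\psi|E_a^x Q_b^y|\psi\rangle=p(a,b\mid x,y)$ then yields \eqref{eq3.1}.

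The main obstacle is exactly the centrality of the folded projections $\Pi_b^y$, and this is where \emph{both} hypotheses $|X|=|A|=2$ are unavoidable: for three or more projections the relevant meets need not be central, and simultaneously $\mathcal{A}(X,A)$ stops being nuclear (e.g.\ $\bZ_2*\bZ_2*\bZ_2$ contains a free group, so amenability of the trace also breaks), which is precisely the continuity difficulty flagged for the general case. A secondary point demanding care is the degenerate situation $P_x^{b,y}=\mathbf{1}$ (Bob's $b$ compatible with every Alice answer at $(x,y)$), where the meet collapses to a single non-central rank-one projection; one must verify that the imitation conditions together with $p\in\mathcal{C}_{ns}(\mathcal{G})$ either exclude this or still force $\Pi_b^y$ central, and that $|\psi\rangle$ is genuinely separating for $M$ so that the vector identities above promote to the operator identities used throughout.
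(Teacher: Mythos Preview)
Your route to amenability of $\tau$ is genuinely different from the paper's and considerably shorter. The paper shows $\mathfrak{A}\simeq C^*(\bZ_2*\bZ_2)$ is exact, hence has property~C, then builds an explicit $*$-homomorphism $s:\mathcal{U}^{\op}\to\mathcal{V}$ (extending $(P_{a_1}^{x_1}\cdots P_{a_n}^{x_n})^{\op}\mapsto\Xi_{a_1}^{x_1}\cdots\Xi_{a_n}^{x_n}$ by a delicate SOT argument using the trace), and finally invokes \cite[Lemma~9.2.9]{brown88c} to obtain a min-bounded $\sigma=\varPhi\circ(\mathrm{id}\otimes s)$ on $\mathcal{U}\otimes_{\min}\mathcal{U}^{\op}$ witnessing amenability. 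You bypass all of this: $\bZ_2*\bZ_2$ is amenable, so $\mathcal{A}(X,A)$ is nuclear, hence $M=\mathfrak{A}''$ is injective in any representation, and every trace on an injective von Neumann algebra is amenable via $\tau\circ\mathcal{E}$ for a conditional expectation $\mathcal{E}$. That argument is correct and does not even need the injectivity hypothesis on $\pi_1,\pi_2$. Two small adjustments: your appeal to injectivity of $\pi_1$ for the separating-vector property is misplaced---the paper obtains it by restricting to $\mathcal{K}=\overline{\mathcal{U}|\psi\rangle}=\overline{\mathcal{V}|\psi\rangle}$, which is what you should do as well; and invoking Theorem~\ref{independent} to get traciality of $\tau$ on $\mathfrak{A}$ up front is a legitimate shortcut over the paper's Claim~1, but note it replaces the state $\varphi$ of the theorem statement by a possibly different one.

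The real gap is exactly the one you flag. Your structural fact is true only for \emph{proper} spectral pieces: in the two-projection algebra generated by $p,q$, the four Halmos corners $p\wedge q$, $p\wedge(1-q)$, $(1-p)\wedge q$, $(1-p)\wedge(1-q)$ are indeed central. But ${}^x\Pi_b^y\in\{0,E_{a_0}^x,E_{a_1}^x,\mathbf{1}\}$, and nothing in Definition~\ref{D2.2} rules out ${}^{x_1}\Pi_b^y=\mathbf{1}$ for some $(y,b)$; then $\Pi_b^y={}^{x_2}\Pi_b^y$ may equal $E_{a_0}^{x_2}$, which commutes with $E_a^{x_1}$ only when $M$ is abelian. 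In that case your $\rho$ is not a $*$-homomorphism on the tensor algebra and \eqref{eq3.1} is not established. The paper does not argue via centrality at all: it defines $\rho(e_a^x\otimes f_b^y)=P_a^x\Pi_b^y$ and cites the PVM relations \eqref{welldef} for well-definedness, so it never introduces your obstruction (though it is admittedly terse on why the two tensor legs have commuting images). To repair your approach you would need either to show that the imitation conditions force the degenerate configurations to satisfy $\Pi_b^y\in Z(M)$ anyway, or to drop centrality and prove $[E_a^x,\Pi_b^y]=0$ directly; neither is done in the proposal.
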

\begin{proof}
	
	Since $ \varphi $ is a contractive linear functional on the separable C$^*$-algebra $ \mathcal{A}(X,A)\otimes_{\min}\mathcal{A}(Y,B) $, the Hilbert space $\mathcal{H}$ is the completion of the separable inner product space
	\begin{equation*}
	    \mathcal{A}(X,A)\otimes_{\min}\mathcal{A}(Y,B)/\{x\in\mathcal{A}(X,A)\otimes_{\min}\mathcal{A}(Y,B):\varphi(x^* x)=0\},
	\end{equation*}
	with respect to the inner product $\langle x|y\rangle:=\varphi(y^* x)$. Specifically, the quotient image of $\mathcal{C}(X,A)\otimes_{\alg}\mathcal{C}(Y,B)$ is dense in $\mathcal{H}$.

	Let 
 \begin{equation}  
\mathcal{U}=\bar{\mathfrak{A}}^{\mathrm{WOT}},~\mathcal{V}=\bar{\mathfrak{B}}^{\mathrm{WOT}} 
 \end{equation}
 be the weak-operator-topological (WOT) closure of $ \mathfrak{A},\mathfrak{B} $ respectively. 
	Then $ \mathcal{U},\mathcal{V} $ are von Neumann algebras and convex sets. 
	It's evident that $ \mathcal{U},\mathcal{V} $ commute with each other.  According to 
	Theorem 5.1.2 in  \cite{kadison1997fundamentals}, we know that  the weak-operator-topological(WOT) closure and strong-operator-topological(SOT) closure of $ \mathfrak{A} $ coincide (and equal to $ \mathcal{U} $), similarly for $ \mathcal{V} $.
	
	Since $\pi\big|_{\mathcal{A}(X,A)\otimes 1}:=\pi_1$ and $\pi\big|_{1\otimes\mathcal{A}(Y,B)}:=\pi_2$ are injective, we know that $\pi_1^{-1}$ and $\pi_2^{-1}$ are also isometric representation. Thus, by \cite[Corollary 3.5.5]{brown88c} we get a state 
	\begin{align*}
	    \phi:\mathfrak{A}\otimes_{\min}\mathfrak{B}&\rightarrow\bC\\
	    a\otimes b&\mapsto\varphi(\pi_1^{-1}(a)\otimes\pi_2^{-1}(b))
	\end{align*}
	
	We can define a state 
	\begin{eqnarray*}
	\tau:&\mathcal{U} &\rightarrow  \bC,\\
	 &a&\mapsto\langle\psi|a|\psi\rangle 
	 \end{eqnarray*}
	We show that $ \tau $ and $ \mathcal{U} $ satisfy our conclusion.  We break the arguments into smaller claims. 
	
	\begin{claim}
		$ \tau $ is a tracial state on $ \mathfrak{A} $.
	\end{claim}
	
	The proof is similar to \cite[Theorem 5.1]{lupini2020perfect}.  Let 
	\begin{equation}\label{eq3.2}
	P_a^x=\pi(e_a^x\otimes 1)\in\mathfrak{A},~x\in X,~a\in A;~Q_b^y=\pi(1\otimes f_b^y)\in\mathfrak{B},y\in Y,~b\in B.
	\end{equation}
	Then $ P_a^x, Q_b^y $ are all projections. 
	We can define a projection 
	\begin{equation}\label{eq3.3}
	{}^{x}\Pi_b^y=\sum_{a \in A, \lambda(x, y, a, b)=1} P_a^x\in\mathfrak{A}
	\end{equation} 
	for any $ x\in X,y\in Y $ and $ b\in B $. Since $ p $ is a perfect strategy, we have 
	\begin{equation*}
	\left\langle\psi\left|Q_b^y\right| \psi\right\rangle =\sum_{a \in A}\left\langle\psi\left|P_a^x Q_b^y\right| \psi\right\rangle=\sum_{a \in A, \lambda(x, y, a, b)=1}\left\langle\psi\left|P_a^x Q_b^y\right| \psi\right\rangle =\left\langle\psi\left|^x\Pi_b^y Q_b^y\right| \psi\right\rangle .
	\end{equation*}
	Since $\left(I-^x\Pi_b^y\right) Q_b^y$ is an idempotent, this shows that
	\begin{equation*}
	\|\left(I-{}^x\Pi_b^y\right) Q_b^y|\psi\rangle \|^2=\left\langle\psi\left|\left(I-^x\Pi_b^y\right) Q_b^y\right| \psi\right\rangle=0,
	\end{equation*}
	that is,
	\begin{equation}\label{eq3.6}
	{}^x\Pi_b^yQ_b^y|\psi\rangle=Q_b^y|\xi\rangle, \quad x \in X, y \in Y, b \in B .
    \end{equation}
	
	By the assumption of imitation games, for $b \neq b^{\prime}$ and $y \in Y$ there exists $x \in X$ such that 
	\begin{equation*}
	    \sum_{a \in A} \lambda(x, y, a, b) \lambda\left(x, y, a, b^{\prime}\right)=0.
	\end{equation*}
	For such a choice of $x, y, b, b^{\prime}$, we have
	\begin{equation*}
	\left\langle\psi\left|{}^x\Pi_b^y Q_{b^{\prime}}^{y}\right| \psi\right\rangle=\sum_{a \in A, \lambda(x, y, a, b)=1}\left\langle\psi\left|P_{x, a} Q_{y, b^{\prime}}\right| \psi\right\rangle=0.
	\end{equation*}
	Hence, we have
	\begin{equation}\label{eq3.7}
	    {}^x\Pi_b^y Q_{b^{\prime}}^{y}|\psi\rangle=0.
	\end{equation}
	
	Similarly to \cite{lupini2020perfect}, let
	\begin{equation}\label{PI_Y_B}
	    \Pi_b^y=\underset{x\in X}{\bigwedge}{}^x\Pi_b^y\in \mathcal{U},
	\end{equation}
	by (\ref{eq3.6}), (\ref{eq3.7}) and the definition of the imitation game, we have
	\begin{align*}
	\Pi_b^yQ_b^y|\psi\rangle&=Q_b^y|\psi\rangle,~\text{and}\\
	\Pi_b^yQ_{b^{\prime}}^y|\psi\rangle&=0
	\end{align*}
	whenever $ b^{\prime}\neq b $. Therefore, we get
	\begin{equation}\label{eq3.10}
	\Pi_b^y|\psi\rangle=\Pi_b^y\left(\sum_{b^{\prime}\in B}Q_{b^{\prime}}^{y}\right)|\psi\rangle=\Pi_b^yQ_b^y|\psi\rangle=Q_b^y|\psi\rangle,~\forall y\in Y, b\in B
	\end{equation}
	
	Similarly, we define 
	\begin{equation}\label{eq3.11}
	{}^y\Xi_a^x=\sum_{b \in B, \lambda(x, y, a, b)=1}Q_b^y\in\mathfrak{B},
	\end{equation}
	and
	\begin{equation}\label{eq3.12}
	\Xi_a^x=\underset{y\in Y}{\bigwedge}{}^y\Xi_a^x\in\mathcal{V},
	\end{equation}
	we have
	\begin{equation}\label{eq3.13}
	\Xi_a^x|\psi\rangle=P_a^x|\psi\rangle,~\forall x\in X, a\in A.
	\end{equation}
	
	Let $\mathcal{K}_{\mathcal{U}}$ (resp. $\mathcal{K}_{\mathcal{V}}$ ) be the closure of the span of $\{A|\psi\rangle: A \in \mathcal{U}\}$ (resp. $\{B|\psi\rangle: B \in \mathcal{V}\})$. Suppose that $x_1, \ldots, x_n \in X$ and $a_1, \ldots, a_n \in A$. Then we have
	\begin{equation}\label{eq3.14}
	P_{a_1}^{x_1} \cdots P_{a_n}^{x_n}|\psi\rangle=\Xi_{a_n}^{x_n} \cdots \Xi_{a_1}^{x_1}|\psi\rangle \in \mathcal{K}_{\mathcal{V}}.
	\end{equation}
	Hence, we deduce that $\mathcal{K}_{\mathcal{U}} \subseteq \mathcal{K}_{\mathcal{V}}$. Similarly, we have $\mathcal{K}_{\mathcal{V}} \subseteq \mathcal{K}_{\mathcal{U}}$. Set $\mathcal{K}:=\mathcal{K}_{\mathcal{U}}=\mathcal{K}_{\mathcal{V}}$. Clearly, $\mathcal{K}$ is invariant under the projections $P_a^x$ and $Q_b^y$. Hence, we can replace $P_a^x$ and $Q_b^y$ with their restrictions to $\mathcal{K}$. Thus, without loss of generality, we can assume that $\mathcal{K}=\mathcal{H}$. By the proof of \cite[Theorem 5.1]{lupini2020perfect}, we know that 
    \begin{equation}\label{welldef}
        \Pi_b^y\Pi_b^{y^{\prime}}=0~\text{and}~\sum_{b\in B}\Pi_b^y=1.
    \end{equation} 
	
Based on the above discussions, we can deduce that 
		\begin{align*}
	\langle\psi|P_{a_1}^{x_1}P_{a_2}^{x_2}|\psi\rangle
	=\langle\psi|P_{a_1}^{x_1}\Xi_{a_2}^{x_2}|\psi\rangle
	=\langle\psi|\Xi_{a_2}^{x_2}P_{a_1}^{x_1}|\psi\rangle
	=\langle\Xi_{a_2}^{x_1}\psi|\Xi_{a_1}^{x_1}|\psi\rangle
	=\langle P_{a_2}^{x_2}\psi|P_{a_1}^{x_1}|\psi\rangle
	=\langle\psi|P_{a_2}^{x_2}P_{a_1}^{x_1}|\psi\rangle,
	\end{align*}
	we have 
	\[	\langle\psi|P_{a_1}^{x_1}P_{a_2}^{x_2}|\psi\rangle= \langle\psi|P_{a_2}^{x_2}P_{a_1}^{x_1}|\psi\rangle.\]
		By induction and linearity, we can prove that $ \tau $ is a tracial state on $ \mathfrak{A} $. 
	\begin{claim}
	$ \tau $ is a tracial state on $ \mathcal{U} $.
	\end{claim}

	Suppose $ a,b\in \mathcal{U} $. Since we have proved that $\mathcal{H}$ is separable, according to the Corollary of \cite[Theorem 1.2.2]{arveson2012invitation}, there exist two sequences $ \{a_n\}\subset\mathfrak{A},~\{b_n\}\subset\mathfrak{A} $ such that $ a_n\rightarrow a,~b_n\rightarrow b $ under the strong operator topology. By the sequential continuity of multiplication, we have $ a_nb_n\rightarrow ab $ under the strong operator topology \cite[Problem 113]{halmos2012hilbert}.
	Specially, for any positive real number $ \varepsilon>0 $, there exists $ N\in\bN $ such that for all $ n>N $, \[ \|(ab-a_nb_n)|\psi\rangle\|<\varepsilon.\]
	Therefore, for all $ \varepsilon>0 $, there exists an  $ N\in\bN $, such that for all $ n>N $, we have
	\begin{equation}\label{eq3.15}
	\left|\langle\psi|(ab-a_nb_n)|\psi\rangle\right|\leq\||\psi\rangle\|\cdot\|(ab-a_nb_n)|\psi\rangle\|<\varepsilon,
	\end{equation}
	since we have proved that $ \tau $ is a tracial state on $ \mathfrak{A} $, i.e
	\begin{equation}\label{eq3.16}
	\langle\psi|(a_nb_n-b_na_n)|\psi\rangle=0.
	\end{equation}
	Similarly, we can prove
	\begin{equation}\label{eq3.17}
	\left|\langle\psi|(ba-b_na_n)|\psi\rangle\right|\leq\||\psi\rangle\|\cdot\|(ba-b_na_n)|\psi\rangle\|<\varepsilon.
	\end{equation}
	Combining equation (\ref{eq3.15}),(\ref{eq3.16}) and (\ref{eq3.17}) we know that  for all $ \varepsilon>0 $,
	\begin{equation}\label{eq3.18}
	\left|\langle\psi|(ab-ba)|\psi\rangle\right|\leq\left|\langle\psi|(ab-a_nb_n)|\psi\rangle\right|+\left|\langle\psi|(a_nb_n-b_na_n)|\psi\rangle\right|+\left|\langle\psi|(ba-b_na_n)|\psi\rangle\right|<2\varepsilon.
	\end{equation}
	As  $ \varepsilon $ can be arbitrarily small,  we deduce that 
	\[\langle\psi|(ab-ba)|\psi\rangle=0 \]
	for all $ a,b\in\mathcal{U} $. This shows that $ \tau $ is a tracial state on $ \mathcal{U} $.
	
	\begin{claim}
		$ \tau $ is amenable.
	\end{claim}
	According to \cite[Theorem 6.2.7]{brown88c}, we need to show that there exists a bounded linear functional 
	\[\sigma:\mathcal{U}\otimes_{\min}\mathcal{U}^{\op}\rightarrow\bC\] satisfying  \[ \sigma(a\otimes b^{\op})=\tau(ab).\] 
	We show below how to construct such $ \sigma $.
	
	
	Let
	\begin{align*}
	\varPhi:\mathcal{U}\otimes_{\min}\mathcal{V}&\rightarrow\bC\\
	a\otimes b&\mapsto\langle\psi|ab|\psi\rangle, 
	\end{align*}
	and now we prove that $\varPhi$ is bounded. 
	By \cite[Theorem 4.4.7]{Nelson209note}, it's clear that $\varPhi$ is bi-normal. Since $|X|=|A|=2$, we know $\mathcal{A}(X,A)$ is isometric to the full group C$^*$-algebra $\bC^{*}(\bZ_2^{*2})$. Since $\pi|_{\mathcal{A}(X,A)\otimes 1}$ is an isometry, we obtain that $\mathfrak{A}$ is isometric to $\mathcal{A}(X,A)\simeq \bC^{*}(\bZ_2^{*2})$. It's well known that the group $\bZ_2^{*2}$ is amenable, so $\mathfrak{A}$ is exact. By \cite[Theorem 9.3.1]{brown88c}, we know $\mathfrak{A}$ has property C, see \cite[Definition 9.2.2]{brown88c}. By definition we know $\mathfrak{A},\mathfrak{B}$ are weakly dense in $\mathcal{U},\mathcal{V}$ respectively, and $\varPhi\big|_{\mathfrak{A}\otimes_{\min}\mathfrak{B}}=\phi$. Then by \cite[Lemma 9.2.9]{brown88c}, we know $\varPhi$ is continuous, i.e bounded with respect to the minimal tensor product norm. We list this lemma here for convenience.
	\begin{Lemma}\cite[Lemma 9.2.9]{brown88c}
		Let $M$ and $N$ be von Neumann algebras with weakly dense 
		C$^*$-subalgebras $B\subset M$ and $C\subset N$, respectively. Let: $\Phi:M\otimes_{\alg}N\rightarrow\mathcal{B}(\mathcal{H})$ be 
		a bi-normal unital complete positive map.
		If $B$ has property C and $\Phi$ is continuous on $B\otimes_{\alg} C$ with respect to the minimal tensor product norm, then $\Phi$ is continuous on $M\otimes_{\alg}N$ with respect to the minimal tensor product norm. 
	\end{Lemma}
	~\\
	
	Then we define a $*$-homomophism $ s:\mathcal{U}^{\op}\rightarrow\mathcal{V} $. Firstly we define $ s $ on $ \mathfrak{A}^{\op} $ by:
	\begin{equation}\label{eq3.20}
	s((P_{a_n}^{x_n}\cdots P_{a_1}^{x_1})^{\op})=s\left((P_{a_1}^{x_1})^{\op}\bullet\cdots \bullet(P_{a_n}^{x_n})^{\op}\right)=\Xi_{a_1}^{x_1}\cdots\Xi_{a_n}^{x_n}
	\end{equation}
	It is a $*$-homomorphism from $ \mathfrak{A}^{\op} $ to $ \mathcal{V} $ obviously.
	In addition, by (\ref{eq3.14}), we know 
	\begin{equation}\label{eq*}
	s(a^{\op})|\psi\rangle=a|\psi\rangle,~\forall a\in\mathfrak{A}.
	\end{equation}
	
	Next we extend $ s $ to $ \mathcal{U}^{\op} $. Suppose $ a^{\op}\in \mathcal{U}^{\op} $, then we can find a sequence $ \{a_n^{\op}\}\subset\mathfrak{A} $ which converges to $ a^{\op} $ under the strong operator topology (coincided with the SOT on $\mathcal{U}$). For every $ |\xi\rangle\in\mathcal{H} $, we know there exists a sequence $ \{b_t\}\subset\mathcal{U} $ such that \[ |\xi\rangle=\lim\limits_{t\rightarrow\infty}c_t|\psi\rangle.\]
	That is to say, $ \forall\varepsilon>0 $, there exists $ N_1\in\bN $, for all $ t>N $ we have
	\begin{equation}\label{eq3.21}
	\||\xi\rangle-c_t|\psi\rangle\|<\varepsilon.
	\end{equation}
	Next we prove that $ \{s(a_n^{\op})|\xi\rangle\} $ is a Cauchy sequence in $ \mathcal{H} $. We have
	\begin{equation}\label{eq3.22}
	\|s(a_n^{\op}-a_m^{\op})|\xi\rangle\|\leq\|s(a_n^{\op}-a_m^{\op})b_s|\psi\rangle\|+\|s(a_n^{\op}-a_m^{\op})(|\xi\rangle-b_s\psi\rangle)\|
	\end{equation}
	Since $\{a_n^{\op}\}$ is strongly convergent, by the principle of uniform boundedness, we know $\{\|a_n^{\op}-a_m^{\op}\|\}$ has a uniform upper bound. Combined with $ s $ being bounded on $ \mathfrak{A} $ (since $s$ is a homomorphism  on $\mathfrak{A}$ and thus shrinks spectra), we can deduce that $s(a_n^{\op}-a_m^{\op})\in \mathcal{V} $ is a bounded operator, i.e \[ \|s(a_n^{\op}-a_m^{\op})\|\leq M_1,\]
	and then we can take $ t>N_1 $ such that
	\begin{equation}\label{eq3.23}
	\|s(a_n^{\op}-a_m^{\op})(|\xi\rangle-b_s\psi\rangle)\|<M_1\varepsilon.
	\end{equation}
	For $ \|s(a_n^{\op}-a_m^{\op})b_s|\psi\rangle\| $, we have
	\begin{align}\nonumber
	&\quad\|s(a_n^{\op}-a_m^{\op})b_s|\psi\rangle\|^2\\
	\nonumber&=\|b_s(a_n-a_m)|\psi\rangle\|^2\\
	\nonumber&=\langle\psi|(a_n-a_m)^*b_s^*b_s(a_n-a_m)|\psi\rangle\\
	\label{eq3.27}&=\langle\psi|b_s^*b_s(a_n-a_m)(a_n-a_m)^*|\psi\rangle\\
	\nonumber&\leq\langle\psi|b_s^*b_s|\psi\rangle^{\frac{1}{2}}\langle\psi|(a_n-a_m)(a_n-a_m)^*|\psi\rangle^{\frac{1}{2}}\\
	\label{eq3.29}&=\langle\psi|b_s^*b_s|\psi\rangle^{\frac{1}{2}}\langle\psi|(a_n-a_m)^*(a_n-a_m)|\psi\rangle^{\frac{1}{2}},
	\end{align}
	where (\ref{eq3.27}) and (\ref{eq3.29}) are obtained since $ \tau(\cdot)=\langle\psi|\cdot|\psi\rangle $ is a tracial state on $ \mathfrak{A} $.
	Notice that when $ s\rightarrow\infty $, 
	\[\langle\psi|b_s^*b_s|\psi\rangle^{\frac{1}{2}}=\|b_s|\psi\rangle\|\rightarrow\||\xi\rangle\|.\]
	Hence, we can take a sufficiently large $ s $ such that 
	\[\langle\psi|b_s^*b_s|\psi\rangle^{\frac{1}{2}}<2\||\xi\rangle\|.\]
	Since \[ \langle\psi|(a_n-a_m)^*(a_n-a_m)|\psi\rangle^{\frac{1}{2}}=\|(a_n-a_m)|\psi\rangle\|,\]
	and $ \{a_n^{\op}\} $  converges to $ a^{op} $ under the strong operator topology, and the elements and topology of $ \mathcal{U}^{\op} $ and $ \mathcal{U} $ coincide, 
	we know $ \{a_n|\psi\rangle\} $ is a Cauchy sequence. That is to say, $ \forall\varepsilon>0 $, there exists $ N_2\in\bN $ such that $ \forall n,m>N_2 $,
	\begin{equation*}
	\langle\psi|(a_n-a_m)^*(a_n-a_m)|\psi\rangle^{\frac{1}{2}}<\varepsilon.
	\end{equation*}
	Without loss of generality,  we can assume $ \varepsilon<1 $. Thus there exists $ M_2=\sqrt{2\||\xi\rangle\|}>0 $ such that 
	\begin{equation}\label{eq3.31}
	\|s(a_n^{\op}-a_m^{\op})b_s|\psi\rangle\|<M_2\varepsilon
	\end{equation}
	
	After combining (\ref{eq3.22}), (\ref{eq3.23}) and (\ref{eq3.31}), 
	we have proved that $ \{s(a_n^{\op})|\xi\rangle\} $ is a Cauchy sequence. 
	So we can define $ s(a^{\op})|\xi\rangle $  to be the limit of $ \{s(a_n^{\op})|\xi\rangle\} $. 
	It's not hard to verify that this definition of $ s:\mathcal{U}^{\op}\rightarrow\mathcal{V} $ is well defined. 
	
	The remaining work is to prove that $ s $ is a $*$-homomorphism. 
	Suppose $ a^{\op},b^{\op}\in \mathcal{U}^{\op} $, and there exist two sequences $ \{a_n^{\op}\},\{b_n^{\op}\}\subset\mathfrak{A}^{\op} $ that strongly converge to $ a^{\op}, b^{\op} $, respectively. 
	By the sequential continuity of multiplication, we have $ a_n^{\op}\bullet b_n^{\op}=(b_na_n)^{op} $ strongly converges to $ a^{\op}\bullet b^{\op}=(ba)^{\op} $.
	According to the definition of $ s $ on $ \mathcal{U}^{\op} $, 
	for all $ |\xi\rangle\in\mathcal{H} $,  we have
	\begin{align}
	\label{eq3.33}s(a^{\op}\bullet b^{\op})|\xi\rangle&=\lim\limits_{n\rightarrow\infty}s(a_n^{\op}\bullet b_n^{\op})|\xi\rangle\\
	\label{eq3.34}&=\lim\limits_{n\rightarrow\infty}s(a_n^{\op})s(b_n^{\op})|\xi\rangle\\
	\nonumber&=\lim\limits_{n\rightarrow\infty}s(a_n^{\op})s(b^{\op})|\xi\rangle\\
	\nonumber&=s(a^{\op})s(b^{\op})|\xi\rangle,
	\end{align}
	where (\ref{eq3.33}) is obtained according to  the definition of $ s $, and (\ref{eq3.34}) is true since   $ s $ is a $*$-homomorphism on $ \mathfrak{A} $. Thus $ s $ is a $*$-homomorphism from $ \mathcal{U}^{\op} $ to $ \mathcal{V} $, and it's also bounded by \cite[Theorem 1.3.2]{arveson2012invitation}. What's more, by continuity (i.e., boundedness) of $ s $, we know (\ref{eq*}) holds for all $ a^{\op}\in\mathcal{U}^{\op} $.
	
	Now we define $ \sigma:\mathcal{U}\otimes_{\min}\mathcal{U}^{\op}\rightarrow\bC $ to be
	\begin{equation}\label{eq3.37}
	\sigma=\varPhi\circ(\mathrm{id}\otimes s),
	\end{equation}
	and we have proved that $ \sigma $ is bounded. Moreover, for all $ a\in \mathcal{U} $ and $ b\in\mathcal{U}^{\op} $, we have
	\begin{equation}\label{eq3.38}
	\sigma(a\otimes b^{\op})=\varPhi(a\otimes s(b^{\op}))=\langle\psi|a\cdot s(b^{\op})|\psi\rangle=\langle\psi|a\cdot b|\psi\rangle=\tau(ab).
	\end{equation}
	This shows that $ \tau $ is amenable.
	\\~
	
	Finally, let 
	\begin{align*}
	\rho:\bC\langle X,A\rangle\otimes_{\alg}\bC\langle Y,B\rangle&\rightarrow\mathcal{U}\\
	e_a^x\otimes f_b^y&\mapsto P_a^x\cdot\Pi_b^y.
	\end{align*}
	By (\ref{welldef}) we know $\rho$ is well defined. It's easy to see that
	\begin{equation}\label{eq3.39}
	\tau\circ\rho(e_a^x\otimes f_b^y)=\langle\psi|P_a^x\Pi_b^y|\psi\rangle=\langle\psi|P_a^xQ_b^y|\psi\rangle=\langle\psi|\pi(e_a^x\otimes f_b^y)|\psi\rangle=\varphi(e_a^x\otimes f_b^y)=p(a,b\mid x,y).
	\end{equation}
	Thus we have proved the theorem.
\end{proof}

\begin{remark}
 The von Neumann algebra  
$\mathcal{U}=\bar{\pi(\mathcal{A}(X,A)\otimes\mathbf{1})}^{\mathrm{WOT}},$ that we use in Theorem \ref{Thm3.1} (the *-homomorphism $\pi$ is given in (\ref{pi})) plays the same role of the small algebra $\mathcal{A}(X,A)$ in  Proposition \ref{Mirror}. We use von Neumann algebra instead of C$^*$-algebra, because $\Pi_b^y$ in (\ref{PI_Y_B}) is the intersection of projections in C$^*$-algebra $\mathfrak{A}=\pi(\mathcal{A}(X,A)\otimes\mathbf{1})$,  thus $\Pi_b^y$ may not necessarily belong to $\mathfrak{A}$. 
\end{remark}

\section{Our Problems}

In Theorem \ref{Thm3.2} and \ref{Thm3.1}, we try to generalize the result of \cite{lupini2020perfect}, give a characterization of perfect approximate strategies for imitation games using the amenable tracial state.
Our conjectured main theorem should be of the following form.

\begin{prop}[Not Proved]\label{P_conj}
	Let $ \mathcal{G}=(X,Y,A,B,\lambda) $ be an imitation game, and $ p\in\mathcal{C}_{ns}(\mathcal{G}) $ be a perfect non-signalling correlation. The following statements are equivalent:
	\begin{enumerate}[(i)]
		\item $p \in \mathcal{C}_{qa}$;
		\item there exist a von Neumann algebra $\mathcal{U}$ with an amenable tracial state $\tau$ on it, and a $^*$-homomorphism $\rho$ from the game algebra $\mathbb{C}\langle X, A \rangle \otimes_{\alg} \mathbb{C}\langle Y, B \rangle$ to $\mathcal{U}$, such that
		\[
		\tau \circ (e_a^x \otimes f_b^y) = p(a, b \mid x, y).
		\]
	\end{enumerate}
\end{prop}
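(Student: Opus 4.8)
The reverse implication (ii)$\Rightarrow$(i) is nothing but Theorem \ref{Thm3.2}, whose proof uses no restriction on $X,Y,A,B$; so the whole content is the forward direction (i)$\Rightarrow$(ii), and the plan is to rerun the argument of Theorem \ref{Thm3.1} while trying to dispense with the two hypotheses ($|X|=|A|=2$ and injectivity of $\pi_1,\pi_2$) imposed there. From $p\in\mathcal{C}_{qa}(\mathcal{G})$ I would first apply Theorem \ref{independent} to obtain a state $\varphi$ on $\mathcal{A}(X,A)\otimes_{\min}\mathcal{A}(Y,B)$ realizing $p$ whose restrictions to $\mathcal{A}(X,A)\otimes\mathbf{1}$ and $\mathbf{1}\otimes\mathcal{A}(Y,B)$ are both tracial; this bi-traciality is precisely what will make $\tau$ a trace below. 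The GNS construction yields $(\mathcal{H},|\psi\rangle,\pi)$, and I set $\mathfrak{A}=\pi(\mathcal{A}(X,A)\otimes\mathbf{1})$, $\mathfrak{B}=\pi(\mathbf{1}\otimes\mathcal{A}(Y,B))$, with WOT-closures $\mathcal{U}=\bar{\mathfrak{A}}^{\mathrm{WOT}}$ and $\mathcal{V}=\bar{\mathfrak{B}}^{\mathrm{WOT}}$, the trace-candidate $\tau(\cdot)=\langle\psi|\cdot|\psi\rangle$ on $\mathcal{U}$, and the candidate $*$-homomorphism $\rho(e_a^x\otimes f_b^y)=P_a^x\Pi_b^y$ into $\mathcal{U}$.

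Everything that is insensitive to the cardinalities I would transport verbatim from Theorem \ref{Thm3.1}. Using only perfectness of $p$ and the two defining conditions of an imitation game one obtains ${}^x\Pi_b^yQ_b^y|\psi\rangle=Q_b^y|\psi\rangle$ and ${}^x\Pi_b^yQ_{b'}^y|\psi\rangle=0$, and hence, forming $\Pi_b^y=\bigwedge_{x}{}^x\Pi_b^y\in\mathcal{U}$ and $\Xi_a^x=\bigwedge_{y}{}^y\Xi_a^x\in\mathcal{V}$, the identities $\Pi_b^y|\psi\rangle=Q_b^y|\psi\rangle$ and $\Xi_a^x|\psi\rangle=P_a^x|\psi\rangle$, the orthogonality $\Pi_b^y\Pi_{b'}^y=0$ with $\sum_b\Pi_b^y=1$ (so that $\rho$ is well defined), and the coincidence $\mathcal{K}_{\mathcal{U}}=\mathcal{K}_{\mathcal{V}}$ of cyclic subspaces that lets me assume $\mathcal{H}=\mathcal{K}$. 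Traciality of $\tau$ on $\mathfrak{A}$ is now immediate from the bi-traciality of $\varphi$, and a separability-plus-SOT-approximation argument (Claim 2 of Theorem \ref{Thm3.1}, via the corollary to \cite[Theorem 1.2.2]{arveson2012invitation} and joint sequential SOT-continuity of multiplication on bounded sets) upgrades it to traciality of $\tau$ on $\mathcal{U}$. None of these steps uses $|X|=|A|=2$, so in full generality $\tau$ is a tracial state on $\mathcal{U}$ and $\rho$ is a well-defined $*$-homomorphism with $\tau\circ\rho(e_a^x\otimes f_b^y)=p(a,b\mid x,y)$.

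The entire difficulty is therefore concentrated in the \emph{amenability} of $\tau$, i.e. in producing a bounded $\sigma:\mathcal{U}\otimes_{\min}\mathcal{U}^{\op}\to\bC$ with $\sigma(a\otimes b^{\op})=\tau(ab)$. Following Theorem \ref{Thm3.1} I would build the $*$-homomorphism $s:\mathcal{U}^{\op}\to\mathcal{V}$ sending $(P_{a_n}^{x_n}\cdots P_{a_1}^{x_1})^{\op}\mapsto\Xi_{a_1}^{x_1}\cdots\Xi_{a_n}^{x_n}$, verify the intertwining $s(a^{\op})|\psi\rangle=a|\psi\rangle$, extend $s$ to all of $\mathcal{U}^{\op}$ by the Cauchy-sequence estimate (which only uses that $\tau$ is already tracial on $\mathfrak{A}$), and finally set $\sigma=\varPhi\circ(\mathrm{id}\otimes s)$, where $\varPhi:\mathcal{U}\otimes_{\min}\mathcal{V}\to\bC$ is $a\otimes b\mapsto\langle\psi|ab|\psi\rangle$. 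The whole construction then collapses to a single analytic fact: the boundedness of $\varPhi$ for the minimal tensor norm. This is the hard part, and the reason the statement remains conjectural. In the special case it was deduced from property C of $\mathfrak{A}$ through \cite[Lemma 9.2.9]{brown88c}, and property C was available only because $|X|=|A|=2$ makes $\mathcal{A}(X,A)\cong\bC^{*}(\bZ_2^{\ast 2})=\bC^{*}(D_\infty)$ nuclear, hence exact.

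This is exactly where I expect to be stuck. In general $\mathcal{A}(X,A)\cong\ast_{x\in X}\bC^{|A|}\cong\bC^{*}(\bZ_{|A|}^{\ast|X|})$, and for every imitation game outside the degenerate range (already for $|X|=3,|A|=2$ or $|X|=2,|A|=3$) the group $\bZ_{|A|}^{\ast|X|}$ contains a copy of $\bF_2$, so $\mathcal{A}(X,A)$ contains the non-exact algebra $\bC^{*}(\bF_2)$ and is itself non-exact; the exactness underlying property C is then lost, and there is no reason for $\varPhi$ to extend boundedly from $\mathfrak{A}\otimes_{\alg}\mathfrak{B}$ to the von Neumann tensor product. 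The same defect also obstructs the removal of the injectivity assumption on $\pi_1,\pi_2$, since without it one needs the minimal norm to descend to the quotients $\mathcal{A}(X,A)/\ker\pi_1$ and $\mathcal{A}(Y,B)/\ker\pi_2$, which is again an exactness statement. The most promising route around this is to bypass property C altogether and instead realize $\tau$ as a weak$^*$-limit of the traces coming from the finite-dimensional approximants guaranteed by $p\in\mathcal{C}_{qa}(\mathcal{G})$ — each such trace is amenable and the amenable traces form a weak$^*$-closed set — thereby transporting amenability to the limit; the delicate obstruction, which is the continuity difficulty flagged in the abstract, is that the projections $\Pi_b^y$ are infinite WOT-meets and need not vary continuously along the approximating sequence.
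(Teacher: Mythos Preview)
Your analysis is accurate and, crucially, agrees with the paper's own assessment: Proposition~\ref{P_conj} is explicitly labeled ``Not Proved'' and is discussed in Section~3 as an open problem, not as a theorem with a proof. The paper's discussion there coincides with yours almost point for point: (ii)$\Rightarrow$(i) is Theorem~\ref{Thm3.2}; for (i)$\Rightarrow$(ii) one takes $\mathcal{U}=\overline{\pi(\mathcal{A}(X,A)\otimes\mathbf{1})}^{\mathrm{WOT}}$ and $\tau(\cdot)=\langle\psi|\cdot|\psi\rangle$; the traciality of $\tau$ and the well-definedness of $\rho$ go through for arbitrary imitation games; and the sole obstruction is amenability of $\tau$, which reduces to the $\min$-continuity of the multiplication map $\alpha:\mathcal{U}\otimes_{\min}\mathcal{V}\to\mathcal{B}(\mathcal{H})$ (equivalently of $\varPhi$). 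The paper even records the same warning you give, namely that the product of the regular representations of a nonamenable group furnishes a counterexample to $\min$-continuity of such a map in the abstract, so the special structure of $\mathcal{U},\mathcal{V}$ must be used if the conjecture is to hold.

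There are two small but genuine differences worth noting. First, you propose to start from Theorem~\ref{independent} and take the bi-tracial state $\varphi$ there as the input to the GNS construction, so that traciality of $\tau$ on $\mathfrak{A}$ is immediate rather than rederived from the imitation-game identities; the paper instead starts from an arbitrary state (Proposition~\ref{P2.1}) and reproves traciality internally. Your shortcut is legitimate and the paper remarks that Theorem~\ref{independent} is ``independent'' of the conjecture and that they do not know whether it helps; your observation shows that it at least streamlines the tracial part, though not the amenability part. Second, you sketch an alternative attack---passing amenability through a weak$^*$-limit of the finite-dimensional traces supplied by $p\in\mathcal{C}_{qa}$---and correctly isolate its failure mode (the infinite meets $\Pi_b^y$ need not be stable along the approximating sequence). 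The paper does not mention this route; it is a reasonable idea but, as you note, runs into exactly the continuity difficulty flagged in the abstract.

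In short: there is no proof in the paper to compare against, and your identification of the gap (loss of exactness/property~C of $\mathcal{A}(X,A)$ once $\bZ_{|A|}^{\ast|X|}$ contains $\bF_2$, hence no access to \cite[Lemma~9.2.9]{brown88c}) is precisely the obstruction the authors report.
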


Theorem \ref{Thm3.2} has proved the direction (ii)$\Rightarrow$(i).
For (i)$\Rightarrow$(ii), we think the von Neumann algebra $\mathcal{U}$ should be constructed as the closure of $\pi(\mathcal{A}(X, A) \otimes 1)$ in the weak operator topology. 
To prove that $\tau$ is amenable, we need to demonstrate that the mapping $\mathcal{U} \otimes_{\min} \mathcal{U}^{\op} \rightarrow \mathbb{C},~a \otimes b \mapsto \tau(ab)$ is continuous with respect to the minimal tensor norm. 
To this end, we attempt to prove the continuity of $\tau$ via the following path:
\begin{equation*}
	\mathcal{U} \otimes_{\min} \mathcal{U}^{op} \xrightarrow{\mathrm{id} \otimes s} \mathcal{U} \otimes_{\min} \mathcal{V} \xrightarrow{\alpha}{\mathcal{B}(\mathcal{H})}\xrightarrow{a\otimes b\mapsto\langle \psi | a \cdot b | \psi \rangle} \mathbb{C},
\end{equation*}
where $\mathcal{H}$ is from the GNS construction of $\varphi$.

In the proof of Theorem \ref{Thm3.1}, we have already obtained the continuity of $s$. But the mapping 
\begin{equation*}
	\alpha: \mathcal{U} \otimes_{\min} \mathcal{V} \rightarrow\mathcal{B}(\mathcal{H}),~a\otimes b\mapsto a\cdot b
\end{equation*}
may not be continuous with respect to the minimal tensor norm. We note that there exists the following counterexample.
\begin{Example}\cite[Exercise 3.6.3]{brown88c}
	Let $\Gamma$ be a discrete group and let 
	$\lambda \times \rho: C_{\lambda}^{*}(\Gamma) \odot C_{\rho}^{*}(\Gamma) \rightarrow \mathbb{B}\left(\ell^{2}(\Gamma)\right)$
	be the product of the left and right regular representations. Then $\Gamma$  is amenable if and only if  $\lambda \times \rho$  is continuous with respect to the minimal tensor product norm.
\end{Example}

Thus, if we take $\Gamma=F_2$ as the free group generated by two elements, since $F_2$ is not amenable, we know $\lambda \times \rho$ is not continuous with respect to the minimal tensor product norm. 

However, this counterexample does not directly disprove Proposition \ref{P_conj}, because $\mathcal{U}$ and $\mathcal{V}$ have more structure. \cite[Lemma 9.2.9]{brown88c} is a sufficient condition to get min-continuity, and we don't know whether $\alpha$ is continuous with respect to the minimal tensor product norm for the general case. Therefore, we don't know whether Proposition \ref{P_conj} holds for general imitation games, or it need more conditions to be true.  
What's more, Theorem \ref{independent} is independent of Proposition \ref{P_conj}. It is more refined than Proposition \ref{P2.1}, but we do not know if it is helpful for proving Proposition \ref{P_conj}.

\bibliographystyle{unsrt}
\bibliography{LYZ2024ref.bib}
\end{document}